\numberwithin{equation}{section}
   \newtheorem{theorem}{Theorem}[section]
    \newtheorem{lem}[theorem]{Lemma}
   \newtheorem{prop}[theorem]{Proposition}
     \newtheorem{defi}[theorem]{Definition}
\let\a=\alpha    \let\d=\delta  \let\e=\varepsilon
 \let\g=\gamma     \let\k=\kappa  \let\l=\lambda
      \let\o=\omega      
\let\r=\rho  \let\s=\sigma    
  \let\z=\zeta
\let\D=\Delta   \let\G=\Gamma   
\let\O=\Omega
\renewcommand{\hat}{\widehat}
\newcommand{\cA}{\ensuremath{\mathcal A}} 
\newcommand{\cB}{\ensuremath{\mathcal B}} 
\newcommand{\cD}{\ensuremath{\mathcal D}} 
\newcommand{\cE}{\ensuremath{\mathcal E}} 
\newcommand{\cG}{\ensuremath{\mathcal G}}
\newcommand{\cL}{\ensuremath{\mathcal L}}
\newcommand{\bbE}{{\ensuremath{\mathbb E}} }
\newcommand{\bbN}{{\ensuremath{\mathbb N}} } 
\newcommand{\bbP}{{\ensuremath{\mathbb P}} } 
\newcommand{\bbR}{{\ensuremath{\mathbb R}} }
\newcommand{\bbZ}{{\ensuremath{\mathbb Z}} }
\newcommand{\tmix}{T_\mathrm{mix}}
\renewcommand{\O}{\Omega}
\newcommand{\ent}{\mathrm{Ent}}
\newcommand{\cov}{\mathrm{Cov}}
\newcommand{\si}{\sigma}
\renewcommand{\l}{\lambda}
\renewcommand{\a}{\alpha}
\newcommand{\grad}{\nabla}
\renewcommand{\star}{*}
\renewcommand{\u}{\pi} 
\def\thsp{\thinspace}
\newcommand{\var}{{\rm Var} }
  \def\tc{\thsp | \thsp}
\newcommand{\ga}{\alpha}
\newcommand{\gga}{\gamma}            
\newcommand{\gO}{\Omega}
\newcommand{\Z}{\mathbb{Z}}
\newcommand{\ind}{\mathbf{1}}
\newcommand{\Tm}{T_{\text{mix}}}
\begin{document}

\title[Dynamics of a directed $(1+d)-$dimensional polymer]{Convergence to equilibrium for a directed $(1+d)-$dimensional polymer }
\author{Pietro Caputo}
\address{Pietro Caputo \\
Universit\`a  Roma Tre. 
}
\email{caputo@mat.uniroma3.it}
\author{Julien Sohier}
\address{Julien Sohier \\Technische Universiteit Eindhoven
}
\email{sohier@tue.nl}

\thanks{ This work was partially supported by the European Research Council through the ÒAdvanced GrantÓ PTRELSS 228032. We thank Fabio Martinelli for several stimulating discussions.}

\begin{abstract}
We consider a flip dynamics for  directed $(1+d)-$dimensional lattice paths with length $L$. The model can be interpreted as a higher dimensional version of the simple exclusion process, the latter corresponding to the case $d=1$. We
prove that the mixing time of the associated Markov chain scales like $L^2\log L$ up to a $d$-dependent multiplicative constant. The  key step in the proof of the upper bound is to show
that the system satisfies a logarithmic Sobolev inequality on the diffusive scale $L^2$ for every fixed $d$, which we achieve by a suitable induction over the dimension together with an estimate for adjacent transpositions. The lower bound is obtained with a version of Wilson's argument \cite{Wil} for the one-dimensional case.
   \\
  \\
  2000 \textit{Mathematics Subject Classification: 60K35, 82C20, 82C41}
  \\
  \textit{Keywords: exclusion process, adjacent transpositions, logarithmic Sobolev inequality, mixing time.}
\end{abstract}

\maketitle

\thispagestyle{empty}

 \section{Introduction}
  Consider the set $\O_L$ of all $\bbZ^d$ paths which start and end at the origin after $L$ steps, where $L$ is an even integer. That is, the set of vectors $\eta=(\eta_0,\dots,\eta_L)$, with $\eta_x\in \bbZ^d$, $\eta_0=\eta_L =0$, with $|\eta_x-\eta_{x+1}|=1$. Alternatively, we can look at $\O_L$ as the set of all directed paths in $(1+d)$ dimensions which start at $(o,0)$ and end at $(o,L)$ where $o$ stands for the origin of $\bbZ^d$.  We interpret a configuration $\eta\in\O_L$ as a directed $(1+d)-$dimensional polymer. 

Consider the  Markov chain where independently, at the arrival times of a Poisson clock with intensity $1$, each site $x\in\{1,\dots,L-1\}$ updates the value of $\eta_x$ with a random $\eta'_x$ chosen uniformly among all possible values of the polymer at that site given the values of the polymer $\eta_y$ at all sites $y\neq x$.  To define this process formally, let $\mu$ denote the uniform probability measure on $\O_L$, and write 
\begin{equation*}
     Q_{x}f(\eta) = \mu(f\,|\,\eta_{y }, y\neq x),
   \end{equation*} 
for the conditional expectation of a function $f:\O_L\mapsto\bbR$ at $x$ given the values $\eta_{y}$ at all vertices $y \neq x$. Then, the process introduced above is the continuous-time Markov chain with infinitesimal generator 
   \begin{equation}\label{dyn}
    \cL f = \sum_{x=1}^{L-1} [ Q_{x}f - f],
   \end{equation} 
   for all functions $f:\O_L\mapsto\bbR$. Note that $\cL$ defines a bounded self-adjoint operator on $L^2(\O_L,\mu)$. Indeed, $\cL$ is a symmetric $|\O_L|\times|\O_L|$ matrix.
   For any  $\si\in\O_L$, let $\eta^\si_t$ denote the polymer configuration at time $t$ when the initial condition is $\si\in\O_L$, so that, for any $t\geq 0$,  $\si,\xi\in\O_L$, the matrix element
$p_t(\si,\xi) := e^{t\cL}(\si,\xi)$
represents the probability of the event $\eta^\si_t=\xi$. Since $\cL$ is irreducible and symmetric, one sees that $\mu$ is the unique invariant measure and 
   $$
   \lim_{t\to\infty}p_t(\si,\xi) =\mu(\xi),
   $$
   for any $\si,\xi\in\O_L$.
%
 The mixing time $\tmix$ is defined by 
        \begin{equation}\label{defmix}
       \tmix = \inf\Big\{ t > 0: \max_{\si \in \O_L} \|p_{t}(\si,\cdot)- \mu\|_{\rm TV} \leq 1/4  \Big\},
      \end{equation} 
       where $\|\cdot\|_{\rm TV}$ denotes the total variation distance:
      \begin{equation}
       \|\nu - \nu'\|_{\rm TV} = \tfrac{1}{2} \sum_{\eta \in \gO_{L}} |\nu(\eta) - \nu'(\eta)|,
      \end{equation}  
        for probabilities $\nu,\nu'$ on $\gO_{L}$. We refer to \cite{LPW} for more background on this standard notion of mixing time.
         
The one-dimensional case $d=1$ has been extensively studied in the past. This model is equivalent to the simple exclusion process on the segment $\{1,\dots,L\}$ with $L/2$ particles.
It was shown by D.B.\ Wilson \cite{Wil}
that the mixing time scales like $L^2\log L$ up to a multiplicative constant. 
More recently, a finer analysis of the mixing time was obtained by H.\ Lacoin \cite{Lacoin},  who showed that as $L\to\infty$
 $$\tmix = \big(\tfrac1{4\pi^2}+o(1)\big)L^2\log L.$$ 
Below, we will consider the higher dimensional case $d>1$, where apparently no estimates of this type have been obtained before. As explained later on, one may interpret this as a suitable exclusion process with $d$ different types of particles.
Our main result is as follows.

 \begin{theorem}\label{mainmix}
For any $d \geq 2$, there exist constants $c,C > 0$ such that the inequality
 \begin{equation}\label{esti}
  c\, L^{2} \log L \leq \tmix \leq C L^{2} \log L
 \end{equation}  
 holds for all $L$.
 \end{theorem}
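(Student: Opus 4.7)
The plan is to prove the two halves of \eqref{esti} separately, following the two strategies advertised in the abstract.

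\emph{Upper bound.} The target is the logarithmic Sobolev inequality
\[
\ent_\mu(f^{2})\ \le\ \frac{L^{2}}{c_d}\,\cE(f,f),\qquad \cE(f,f):=-\langle f,\cL f\rangle_\mu,
\]
for a constant $c_d>0$ depending only on $d$. Since $|\O_L|\le(2d)^L$ gives $\log(1/\mu_{\min})=O(L)$, the standard hypercontractivity bound $\tmix\le c_{\mathrm{LS}}^{-1}\bigl(4+\log\log \mu_{\min}^{-1}\bigr)$ then delivers $\tmix\le C L^{2}\log L$. I would prove the log-Sobolev bound by induction on $d$. The base case $d=1$ coincides with the simple exclusion process on $L$ sites with $L/2$ particles, whose $1/L^{2}$ log-Sobolev constant is classical (Yau; Lee--Yau; Cancrini--Martinelli).

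\emph{Inductive step and the main obstacle.} For $d\ge 2$ I would condition on the indicator $\chi_x:=\mathbf{1}\{\xi_x\parallel e_d\}$, where $\xi_x:=\eta_x-\eta_{x-1}$. Given $\chi$, the configuration decouples into two independent pieces: a one-dimensional simple exclusion on the $|\{x:\chi_x=1\}|$ sites where $\xi_x\in\{\pm e_d\}$, and a $(1+(d-1))$-dimensional polymer on the remaining sites. Both pieces have log-Sobolev constant of order $1/L^{2}$, by the base case and the inductive hypothesis respectively. The two-scale entropy decomposition
\[
\ent_\mu(f^{2})\ \le\ \mu\!\left[\ent_{\mu(\cdot\mid\chi)}(f^{2})\right]+\ent_{\bar\mu}\bigl(\mu(f^{2}\mid\chi)\bigr)
\]
then reduces matters to a log-Sobolev bound of order $1/L^{2}$ for the marginal law $\bar\mu$ of $\chi$. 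This marginal chain is where the main obstacle lies: it is not itself an exclusion chain, since besides swapping neighbouring $(0,1)$ and $(1,0)$ entries it also creates or annihilates pairs $(0,0)\leftrightarrow(1,1)$ whenever a local back-and-forth $(\pm e_d,\mp e_d)$ pattern is exchanged with a back-and-forth pair along another axis. I would control it by a Dirichlet-form comparison of Diaconis--Saloff-Coste type with the interchange chain on $L$ labels in $\{0,1\}$ driven by adjacent transpositions, whose log-Sobolev constant is of order $1/L^{2}$; this is precisely the ``adjacent transpositions'' estimate referred to in the abstract, combined with a local estimate accounting for the pair-creation/annihilation moves.

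\emph{Lower bound.} For the matching lower bound I would follow Wilson \cite{Wil}. Fix an axis, say $e_1$, and consider the test function
\[
f(\eta)\ :=\ \sum_{x=1}^{L-1}\sin(\pi x/L)\,\langle\eta_x,e_1\rangle.
\]
Using the invariance of $\cL$ under permutations of the coordinate axes, a direct computation of $\cL f$ shows that $f$ is an approximate eigenfunction of $\cL$ with eigenvalue $\lambda\asymp 1/L^{2}$; the correction due to axis-switching events at single sites is of lower order because $f$ depends only on the first coordinate, and the remaining $d-1$ axes are symmetric under the dynamics. Starting from a deterministic configuration that maximises $|f|$, for instance the triangular path made of $L/2$ steps $+e_1$ followed by $L/2$ steps $-e_1$, Wilson's second-moment comparison then yields $\tmix\ge (1-o(1))(\log L)/(2\lambda)\ge cL^{2}\log L$.
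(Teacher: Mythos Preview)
Your lower bound is essentially the paper's argument. One correction: the test function $f(\eta)=\sum_x\sin(\pi x/L)\langle\eta_x,e_1\rangle$ is an \emph{exact} eigenfunction of $\cL$, not an approximate one. Indeed $Q_x\eta_x=\tfrac12(\eta_{x-1}+\eta_{x+1})$ in all cases (when $\eta_{x-1}=\eta_{x+1}$ the uniform average over the $2d$ admissible values of $\eta_x$ is again $\eta_{x-1}$), so $\cL\langle\eta_y,e_1\rangle=(\Delta\langle\eta_\cdot,e_1\rangle)_y$ with no axis-switching correction. The rest (the variance bound $\var(\Phi^*_t)\le CL^3$ and Chebyshev) is as in the paper.

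Your upper bound plan has a genuine gap, and it diverges from the paper precisely at the choice of conditioning: you condition on the full indicator field $\chi=(\chi_x)_x$, whereas the paper conditions only on the scalars $N_1,\dots,N_d$. Two difficulties result. First, given $\chi$, the \emph{measure} does factor over $A=\{\chi=1\}$ and $B=A^c$, but the Dirichlet form does not: the $O(1/L^2)$ log-Sobolev inequalities you invoke for the two factors refer to adjacent transpositions on the \emph{contiguous} segments $\{1,\dots,|A|\}$ and $\{1,\dots,|B|\}$, and swaps between consecutive elements of the scattered set $A$ (or $B$) are not moves of $\cL$. Routing such a swap through genuine adjacent transpositions changes $\chi$ along the path, so the comparison $\mu[\cE_\chi(\sqrt f,\sqrt f)]\le C\,\cE(\sqrt f,\sqrt f)$ is not for free and you have not supplied it. Second, your proposed control of the marginal $\bar\mu$ by comparison with the interchange chain on $\{0,1\}$-labels cannot work as stated: interchange on two colors is simple exclusion and conserves $\sum_x\chi_x$, while under $\bar\mu$ this sum equals $2N_d$, is random, and is altered by the pair moves $(0,0)\leftrightarrow(1,1)$. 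An LSI on each fixed-sum sector does not yield an LSI for $\bar\mu$.

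The paper avoids both issues by recursing on the scalars. At step $i$ one decomposes $\ent_i(f)=\ent_i(\mu^{(i+1)}(f))+\mu^{(i)}[\ent_{i+1}(f)]$; the first term involves only the one-dimensional law of $N_{i+1}$ given $N_1,\dots,N_i$, handled by Miclo's birth--death criterion together with a path/Schwarz argument and a covariance (Laplace-transform) estimate, see Propositions~\ref{cin}, \ref{cino} and \ref{laptr}. At the bottom of the recursion all $N_j$ are fixed, the conditional measure is uniform on colorings of $\{1,\dots,L\}$ by $2d$ colors, and the relevant moves are exactly the adjacent swaps already present in $\cL$; the required LSI is that of the interchange process on the segment (Theorem~\ref{gln}). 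Thus the ``adjacent transpositions'' estimate of the abstract enters at the base of a recursion over particle \emph{numbers}, not via a marginal on an indicator \emph{field}.
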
     

The lower bound in \eqref{esti} will be obtained in Section \ref{lower} below by a suitable version of a well known argument of D.B.\ Wilson \cite{Wil} for the one-dimensional case. The upper bound requires more work. A direct coupling argument as in the case $d=1$ analyzed by \cite{Wil} does not seem to be available when $d>1$. An important difference with respect to the case $d=1$ is the lack of standard monotonicity tools. 

The main step in the proof of the upper bound is to show that the process satisfies the logarithmic Sobolev inequality with constants scaling like $L^2$. To be more precise, let 
\begin{equation}\label{dirich}\cE(f,g) = -\mu[f\cL g],\end{equation} 
for $f,g:\O_L\mapsto\bbR$,  denote the Dirichlet form of the process, and define the entropy functional 
\begin{equation}\label{entro}
\ent(f) = \mu[f\log f] - \mu[f]\log \mu[f],
\end{equation}
for $f:\O_L\mapsto\bbR_+$. In Section \ref{upper} we show that 
for any $d$, for any $f:\O_L\mapsto\bbR_+$, for any $L\in2\bbN$, one has the inequality 
 \begin{equation}\label{lsi1}
 \ent(f) \leq c\,L^{2} \cE(\sqrt f,\sqrt f)\,
 \end{equation}  
where $c=c(d)$ is a positive constant. 
Once the bound \eqref{lsi1} is available, the upper bound in  Theorem \ref{mainmix} is obtained 
by an application of the standard estimates relating the constant in the log-Sobolev inequality and the mixing time. 

Diffusive scaling of the constants in the log-Sobolev inequality as in \eqref{lsi1} is well known to hold for the simple exclusion process and for  various generalizations of it; see in particular \cite{Yau_gen} and \cite{CMR}. However, the higher dimensional case considered here is not covered by these 
works. One of the main differences is that the model here has $d$ conservation laws rather than just one. We note that if one is after the weaker Poincar\'e, or spectral gap, inequality, then the diffusive estimate could be obtained by adapting the arguments in \cite{shonan}. However, this would not suffice to prove the desired upper bound on the mixing time. 
To prove \eqref{lsi1} instead, we exploit a recursion over the dimension such that at each step the number of particles of a new type is fixed.  At the final stage of the recursion, the numbers of all particle types have been assigned, and the problem is reduced to the proof of diffusive scaling for the log-Sobolev constant in the setting of card shuffling by  adjacent transpositions. The latter is established in Section \ref{upper} below. 
A high-level description of the whole argument is given in Section \ref{over}.
We remark that the same argument actually proves the upper bound $\tmix\leq CL^2\log L $ for the more general problem where the end point $\eta_L$ of the polymer is fixed at an arbitrary value in $ \bbZ^d$, not necessarily the origin, with constant $C$ independent of the value of $\eta_L$.  For simplicity of notation we have chosen to restrict ourselves to the case $\eta_L=o$. On the other hand it should be remarked that our argument provides a constant $C=C(d)$ that is presumably far from optimal, especially for $d$ large.

We end this introduction by mentioning an interesting open question. Consider the above defined polymer model in the presence of a pinning potential, that is when $\mu$ is modified by 
assigning the weight $\l^{N(\eta)}$ to each configuration $\eta\in\O_L$, where $N(\eta)=\sum_{x=1}^{L-1}\ind_{\eta_x=o}$ stands for the number of contacts of the polymer with the origin $o\in\bbZ^d$, and $\l>0$ is a parameter determining the strength of repulsion ($\l<1$) or attraction ($\l>1$) to the origin.  
It is well known that the polymer undergoes a localization/delocalization phase transition, with critical point $\l_c(d)=1$ for $d=1,2$ and $\l_c(d)>1$ for $d>2$; see \cite[Chapter 3]{GBbook}. The mixing time of the polymer in the presence of pinning was studied in depth in \cite{CMT} and \cite{CLMST} in the case $d=1$, where it was shown among other things that there is a slowdown in the relaxation, with subdiffusive behavior, in the delocalized regime $\l<1$. We conjecture that this phenomenon should disappear as soon as $d>1$ and that the mixing time should stay bounded by $O(L^2\log L)$ for any $\l>0$.

          \section{ Proof of the upper bound.}\label{upper}
         
         \subsection{Representation as a particle system}
         Recall that  $\O_L$
 is the set of $\bbZ^d$ paths of length $L$ that start and end at the origin.  For any $\eta\in\O_L$, let $\zeta=\nabla\eta$  
   denote the vector   
   \begin{equation}\label{defzeta}
   \zeta=(\zeta_1,\dots,\zeta_L)\,,\quad \zeta_x=\eta_{x}-
  \eta_{x-1}\,.
  \end{equation}
Through this map the set $\O_L$ will be identified with the set of vectors 
$$
\textstyle{\Big\{\zeta\in \{e_1,\dots,e_{2d}\}^L\,:\;\sum_{i=1}^L\zeta_i = 0\Big\}},  $$
where $e_{j}$, $j=1,\dots,  d$, denotes the canonical basis of $\Z^{d}$, and for notational convenience we define $e_{j+d}:=-e_j$.
        For $j=1,\dots,d$, and $x=1,\dots L$, we say that site $x$ is occupied by a particle of type $j$ if 
$\zeta_x=e_j$, and by an anti-particle of type $j$ if $\zeta_x=-e_j$. At each site there is either a particle or an anti-particle. Because of the constraint $\eta_L=o$, for every type $j=1,\dots,d$ the number of particles equals the number of anti-particles.
 The dynamics defined by \eqref{dyn} is then interpreted naturally as a particle exchange process with creation-annihiliation of particle/anti-particle pairs as follows. 
 Fix a polymer configuration $\eta$ and let $\zeta$ denote the corresponding gradient vector. 
 Fix a site $x$ to be updated. If $\eta_{x-1}\neq \eta_{x+1}$, then one has $\zeta_x= \eta_{x}-
  \eta_{x-1}=e_j$ and $\zeta_{x+1}= \eta_{x+1}-
  \eta_x=e_\ell$, for some $j,\ell\in\{1,\dots,2d\}$ with $e_\ell\neq -e_j$. Thus, in this case 
 there are two possibilities for the new value of $\eta_x$, one corresponding to the current configuration $\eta$, and one corresponding 
 to the configuration $\eta^{x}$ obtained by swapping the increments $\zeta_x,\zeta_{x+1}$. On the other hand, if the polymer is such that $\eta_{x-1}= \eta_{x+1}$, then one must have $(\zeta_x,\zeta_{x+1})=(e_j,-e_j)$, for some $j\in\{1,\dots,2d\}$. Thus, in this case 
 there are $2d$ possibilities for the new value of $\eta_x$. We call $\eta^{x,*,j}$ the polymer configuration that coincides with $\eta$ at all sites $y\neq x$ and such that $(\zeta_x,\zeta_{x+1})=(e_j,-e_j)$.
  Thus, in this process adjacent particles exchange their positions, and when a particle/anti-particle pair occupies adjacent sites it can be deleted to produce a new  particle/anti-particle pair of a different type. 
     With this notation, the generator \eqref{dyn} takes the form
    \begin{equation}\label{desdyn}
      \cL f(\eta) = \sum_{x=1}^{L-1} c_x(\eta)[f(\eta^{x}) - f(\eta)] 
      +  \sum_{x=1}^{L-1} \sum_{j=1}^{2d}  c^{\star, j}_{x}(\eta)[f(\eta^{x, \star, j}) - f(\eta)],
    \end{equation} 
  where $$c_x(\eta):=\tfrac12\,\ind_{\eta_{x-1}\neq\eta_{x+1}},\qquad c^{\star, j}_{x}(\eta):= \tfrac1{2d}\,\ind_{\eta_{x-1}=\eta_{x+1}}.$$
  The Dirichlet form \eqref{dirich} of this process becomes
   \begin{equation}\label{desdyn2}
     \cE(f,g)  = \tfrac12
     \sum_{x=1}^{L-1} \mu[c_x(\nabla_xf)^2] 
      + \tfrac12 \sum_{x=1}^{L-1} \sum_{j=1}^{2d}  \mu[c^{\star, j}_{x}(\nabla_{x}^{*,j}f)^2],
    \end{equation} 
where we use the notation $\nabla_xf(\eta) = f(\eta^{x}) - f(\eta)$, and $\nabla_{x}^{*,j}f(\eta)=f(\eta^{x, \star, j}) - f(\eta)$. 
 Notice that when $d=1$ only the last term in the right hand side of \eqref{desdyn2} survives and we obtain  the symmetric simple exclusion process; see e.g.\ \cite{CMT}. 
          
          \subsection{ Overview of the proof}\label{over}
                     Let us describe the strategy for the upper bound of Theorem  \ref{mainmix}. 
          Recall the definition of the Dirichlet form \eqref{dirich} and define the log-Sobolev constant 
 \begin{equation}\label{lsc}
   \ga(L)= \inf_f \,\frac{\cE(\sqrt{f},\sqrt{f})}{\ent(f)}\,,   \end{equation}
   where $f$ ranges over all functions $f:\O_L\mapsto \bbR_+$.
   
         We refer to \cite{DiSa} for the following  classical inequality relating $\a(L)$ to $\tmix$:
         \begin{equation}\label{mixlo}
   \Tm \leq \frac{4 + \log(\log(1/\pi^{\star}))}{2\ga(L)},
  \end{equation}         
     with $\pi^{\star} := \min_{x \in \gO_{L}} \mu(x) = |\gO_{L}|^{-1}$. 
      Since $|\gO_{L}|\leq (2d)^L$, it suffices to show that $\a(L)^{-1}= O(L^2)$.
   
   Let $N_i$ denote the number of particles of type $i$:
        \begin{equation}\label{def_ni}
         N_{i}(\eta) = \sum_{x=1}^L \ind_{\zeta_x = e_{i}},
        \end{equation} 
        where as above $\zeta_x=\eta_x-\eta_{x-1}$. 
       The law of the vector $(N_{1},\ldots,N_{d})$ under the uniform distribution $\mu$ is given by
        \begin{equation}\label{lawincre}
     \mu(N_{1} = n_{1},\ldots,N_{d}=n_{d}) 
     =\frac1{|\O_L|}{L \choose L/2} {L/2 \choose n_{1},\ldots,n_{d}}^{2} \\
        \end{equation}  
         where $(n_{1},\ldots,n_{d})$ is a vector of non negative integers satisfying $\sum_{j=1}^{d} n_{j} = L/2$,  
          and ${L/2 \choose n_{1},\ldots,n_{d}}$ is the associated multinomial coefficient. Indeed, \eqref{lawincre} follows easily by counting all possible choices of positions of  $n_i$ particles of type $i$ and $n_i$ anti-particles of type $i$, for $i=1,\dots,d$.         
             
            We denote by $\mu^{(i)}$,  $i=1,\dots,d$, the measure $\mu(\cdot|N_{1},\ldots,N_{i})$ obtained by conditioning $\mu$ on a given value of the numbers of particles of type $1,\dots,i$, and we write
         $\mu^{(0)} = \mu$. Notice that $\mu^{(d-1)}=\mu^{(d)}$ because of the global constraint $\sum_{j=1}^dN_j=L/2$. We write
         $$\ent_{i}(f) = \mu^{(i)}[f\log f] - \mu^{(i)}[f]\log \mu^{(i)}[f]\,,$$ for the entropy 
         with respect to the measure $\mu^{(i)}$. Thus, $\ent_{i}(f)$ is a function of the variables $N_{1},\ldots,N_{i}$. 
         
         By adding and subtracting $\mu[f\log \mu(f|X)]$ in the expression \eqref{entro} one obtains the following standard decomposition of entropy via conditioning on a random variable $X$:
          \begin{equation}\label{deco}
          \ent(f) = \ent( \mu(f|X)) + \mu(\ent(f|X))\,,
          \end{equation}
          where $\ent(f|X)$ is the entropy of $f$ with respect to the measure conditioned on the value of $X$ and $ \mu(f|X)$ is the $X$-measurable function defined by conditional expectation. Applied to the measure $\mu^{(i)}$ with $X=N_{i+1}$, and noting that $ \mu^{(i)}(f|N_{i+1})=\mu^{(i+1)}(f)$, \eqref{deco} gives
      that 
       for any $0 \leq i \leq d-1$, 
       \begin{equation}\label{iter}
        \ent_{i}(f) 
        = \ent_{i}(\mu^{(i+1)}(f)) + \mu^{(i)} [\ent_{i+1}(f)].
       \end{equation} 
       Since $\mu^{(d-1)}=\mu^{(d)}$ we note that $\ent_{d-1}(\mu^{(d)}(f))=0$.
A crucial step in our proof will be the following estimate. 
       \begin{theorem}\label{rec_th}
       For any $d\geq 2$, there exist constants $c_1,c_2>0$ such that for all $i=0,\dots,d-2$: 
        \begin{equation}\label{recur}
      \mu(\ent_{i} (\mu^{(i+1)}(f))) \leq c_1 L^{2} \cE(\sqrt{f},\sqrt{f}) + c_2 \mu(\ent_{i+1}(f)),
     \end{equation} 
     for all $f:\O_L\mapsto\bbR_+$.  \end{theorem}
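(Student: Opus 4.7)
The plan rests on the observation that $g:=\mu^{(i+1)}(f)$ is, by construction, $\sigma(N_1,\ldots,N_{i+1})$-measurable; under $\mu^{(i)}$, where $N_1,\ldots,N_i$ are frozen, $g$ depends on the configuration only through the integer $N_{i+1}$. Hence $\ent_i(g)$ is a \emph{one-dimensional} entropy, computed against the marginal law $\nu^{(i)}$ of $N_{i+1}$ under $\mu^{(i)}$, and the proof splits into two steps: a logarithmic Sobolev inequality for $\nu^{(i)}$, and a comparison of the associated birth--death Dirichlet form against the full Dirichlet form $\cE$. We then average the resulting per-level inequality over the law of $(N_1,\ldots,N_i)$.

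\emph{Marginal LSI.} From \eqref{lawincre}, setting $M:=L/2-\sum_{j\le i}n_j$, the conditional joint law of $(N_{i+1},\ldots,N_d)$ given $(N_1,\ldots,N_i)$ is proportional to $\binom{M}{n_{i+1},\ldots,n_d}^{2}$. Summing out $(N_{i+2},\ldots,N_d)$ identifies $\nu^{(i)}$ as a unimodal distribution on $\{0,\ldots,M\}$ concentrated on the scale $\sqrt{M}$ around $M/(d-i)$ (indeed, $\nu^{(i)}$ is a hypergeometric-type law for $d-i=2$ and retains Gaussian-like tails for general $d$). Measures of this shape satisfy a discrete logarithmic Sobolev inequality, established via a discrete Bakry--\'Emery argument or the Miclo--Chen Hardy-type criterion, with a constant $C(d)$ independent of $M$:
\begin{equation*}
\ent_i(g)\;\le\;C(d)\sum_{n}\bigl[\nu^{(i)}(n)\wedge\nu^{(i)}(n+1)\bigr]\bigl(\sqrt{g(n+1)}-\sqrt{g(n)}\bigr)^2.
\end{equation*}

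\emph{Dirichlet form comparison.} The core technical step is to bound each single increment $(\sqrt{g(n+1)}-\sqrt{g(n)})^2$ against the full Dirichlet form. The elementary move that shifts $N_{i+1}$ by one unit is a creation--annihilation step $\eta\mapsto\eta^{x,\star,j}$ at a flat site of the polymer where the current pair is of some type $m>i$, $m\neq i+1$; in a typical configuration such a site may be distant from any particular type-$(i+1)$ pair. We therefore introduce a canonical coupling $\Phi$ between the slices $\{N_{i+1}=n+1\}$ and $\{N_{i+1}=n\}$ made of: a sequence of swap moves $c_x$ transporting a type-$(i+1)$ pair to an available flat spot (at most $O(L)$ moves in the worst case), followed by one creation--annihilation step. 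Using the Jensen-type contraction
\begin{equation*}
\bigl(\sqrt{g(n+1)}-\sqrt{g(n)}\bigr)^2\;\le\;2\,\mu^{(i)}\bigl[(\sqrt f-\sqrt{f\circ\Phi})^2\,\bigm|\,N_{i+1}\in\{n,n+1\}\bigr],
\end{equation*}
then telescoping $\sqrt f-\sqrt{f\circ\Phi}$ along the path and applying Cauchy--Schwarz, the diffusive length of $\Phi$ produces the factor $L^{2}$ multiplying $\cE^{(i)}(\sqrt f,\sqrt f)$, where $\cE^{(i)}$ denotes the subcollection of terms of \eqref{desdyn2} that preserve $N_1,\ldots,N_i$. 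The additive term $c_2\,\mu^{(i)}[\ent_{i+1}(f)]$ arises as the measure-theoretic defect of $\Phi$, i.e.\ from replacing $g$ by $f$ across the marginalisation $\mu^{(i+1)}$, and is controlled by a standard entropy-contraction estimate based on the Donsker--Varadhan variational formula applied at the next level.

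\emph{Conclusion.} Combining the two steps gives a per-level estimate $\ent_i(g)\le c_1 L^2\cE^{(i)}(\sqrt f,\sqrt f)+c_2\mu^{(i)}[\ent_{i+1}(f)]$; taking expectation with respect to $\mu$, and using the trivial bound $\mu[\cE^{(i)}(\sqrt f,\sqrt f)]\le\cE(\sqrt f,\sqrt f)$ since $\cE^{(i)}$ is a subcollection of the terms in $\cE$, delivers the theorem. The principal obstacle is the canonical-coupling construction of Step 2: one must design a family of paths between slices whose amortised diffusive cost produces precisely the factor $L^2$ (rather than a spurious logarithm), while bounding the defect by an entropy at level $i+1$ rather than at level $i$. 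It is also here that the hypothesis $d\ge 2$ is essential, since at least one auxiliary type $m\neq i+1$ must be available in the ``reservoir'' to accept the creation--annihilation step.
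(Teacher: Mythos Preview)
Your outline follows the same architecture as the paper's proof --- reduce to a one-dimensional entropy in the variable $N_{i+1}$, apply a log-Sobolev inequality for the marginal, then compare the resulting birth--death Dirichlet form with $\cE$ via a path argument --- but there are two genuine gaps that would prevent the argument from closing.

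First, the marginal LSI constant is not $C(d)$ independent of $M$. The law $\nu^{(i)}$ has variance of order $M$, and the nearest-neighbour birth--death Dirichlet form you wrote down carries a log-Sobolev constant of order $\bar n\sim M$, not $O(1)$; this is exactly what the paper extracts from the Miclo/Conv$(c,\bar n)$ criterion (Lemma~\ref{cmr_prop} and Proposition~\ref{birthanddeathfin}), producing the prefactor $CL_{i+1}$ in \eqref{recur2}. Your bookkeeping therefore places the full $L^{2}$ on the path step, whereas in fact one factor of $L$ comes from the marginal LSI and the path/change-of-variable step must be organised so that, after summation in $n$ against $\gamma(n)\wedge\gamma(n-1)$, only one further factor of $L$ survives (this is the content of Proposition~\ref{cin} and the estimates \eqref{claims}--\eqref{claimss}).

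Second, your ``Jensen-type contraction'' hides the crux of the comparison. The two slices $\{N_{i+1}=n\}$ and $\{N_{i+1}=n-1\}$ do not have the same cardinality, so there is no measure-preserving coupling $\Phi$; what one actually has is the change of variables \eqref{chgvar}--\eqref{mfsw1}, which rewrites $\mu^{(i)}(f\mid n)$ as an average of $T_{u,v}^{*,i+1}f$ under $\mu^{(i)}(\cdot\mid n-1)$ against a \emph{non-constant} weight $\chi$. This forces the decomposition \eqref{rdeco} into a transport piece $\cA_i$ and a fluctuation piece $\cB_i=2\rho(\mu^{(i)}(\chi f\mid n-1),\mu^{(i)}(f\mid n-1))$. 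Controlling $\cB_i$ by $\mu^{(i)}(\ent_{i+1}(f))$ is not a generic entropy-contraction fact: it requires the exponential-moment bound $\log\nu(e^{t(\chi-1)})\le C_1 t^{2}\Delta$ of Proposition~\ref{laptr}, which in turn rests on sub-Gaussian estimates for the particle numbers (Lemmas~\ref{le1}--\ref{le2} and Proposition~\ref{birthanddeathfin}). Your appeal to the Donsker--Varadhan formula is the right final step, but it does nothing without this Laplace bound as input, and that bound is where the bulk of the work lies.
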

      Once this result is available we proceed as follows. For $i=0$, \eqref{iter} and \eqref{recur} give the estimate
      $$
      \ent(f)\leq c_1 L^{2} \cE(\sqrt{f},\sqrt{f}) + (1+c_2)\mu(\ent_{1}(f)).
      $$
     If we iterate this reasoning, 
we obtain the estimate
     \begin{equation}\label{recur1}
      \ent(f)\leq c'_1 L^{2} \cE(\sqrt{f},\sqrt{f}) + c_2'\mu(\ent_{d}(f)),
       \end{equation} 
       where $c_1'=c_1\sum_{i=0}^{d-1}(1+c_2)^i$ and $c_2'=(1+c_2)^{d}$.
 It remains to estimate $\mu(\ent_{d}(f))$. 
   \begin{theorem}\label{AT_th}
       There exists a constant $C>0$ such that  
        \begin{equation}\label{recur20}
      \mu(\ent_{d} (f)) \leq C L^{2} \cE(\sqrt{f},\sqrt{f}) ,
     \end{equation} 
     for all $f:\O_L\mapsto\bbR_+$.  
       \end{theorem}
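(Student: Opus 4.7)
The measure $\mu^{(d)}=\mu(\cdot\mid N_1,\dots,N_d)$ is uniform over arrangements of a fixed $2d$-coloring of the increments on $\{1,\dots,L\}$, and the only moves of the original dynamics preserving it are adjacent swaps $\zeta_x \leftrightarrow \zeta_{x+1}$. My strategy is twofold: first, relate the full Dirichlet form $\cE$ to the canonical adjacent-transposition Dirichlet form
$$\cE^{(d)}_{\mathrm{AT}}(\sqrt f,\sqrt f) := \tfrac12 \sum_{x=1}^{L-1} \mu^{(d)}\!\left[(\nabla_x\sqrt f)^2\right];$$
second, prove the diffusive log-Sobolev inequality $\ent_d(f)\le CL^2\,\cE^{(d)}_{\mathrm{AT}}(\sqrt f,\sqrt f)$.

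For the comparison, I split each term of $\cE^{(d)}_{\mathrm{AT}}$ according to whether $\zeta_x+\zeta_{x+1}=0$. If $\zeta_x+\zeta_{x+1}\neq 0$ the swap coincides with a $c_x$-move of rate $\tfrac12$, so $\ind{\zeta_x+\zeta_{x+1}\neq 0}=2c_x$; if $\zeta_x=-\zeta_{x+1}$, the swap coincides with the unique $c^{*,j}_x$-move having $e_j=\zeta_{x+1}$, of rate $\tfrac{1}{2d}$, and $\ind{\zeta_x=-\zeta_{x+1}}=2d\,c^{*,j}_x$. Summing over $x$ and taking the outer expectation over $\mu$ via the tower property yields
$$\mu\!\left(\cE^{(d)}_{\mathrm{AT}}(\sqrt f,\sqrt f)\right) \;\le\; (2+2d)\,\cE(\sqrt f,\sqrt f).$$

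For the log-Sobolev step, the cleanest route is via a pushforward identification: $\mu^{(d)}$ is the image of the uniform measure $\nu$ on $S_L$ under the coloring map $\sigma\mapsto(c(\sigma(1)),\dots,c(\sigma(L)))$, where $c:\{1,\dots,L\}\to\{1,\dots,2d\}$ is a fixed coloring realizing the prescribed multiplicities. Since transpositions of two cards of equal color are invisible to this projection, the function $g(\sigma):=f(c\circ\sigma)$ on $S_L$ satisfies $\ent_\nu(g)=\ent_d(f)$ and $\cE^{\nu}_{\mathrm{AT}}(\sqrt g,\sqrt g)=\cE^{(d)}_{\mathrm{AT}}(\sqrt f,\sqrt f)$. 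The claim thus reduces to the diffusive log-Sobolev inequality for the continuous-time adjacent-transposition random walk on $S_L$, which I would prove by a Yau-style induction on $L$: condition on the position of a distinguished card, apply the inductive bound to the two resulting sub-permutations, and reduce the one-dimensional marginal term to the two-color (Bernoulli--Laplace / exclusion) log-Sobolev inequality of Lee--Yau.

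The principal obstacle is this last step. Conditioning on the position $y$ of a distinguished card splits the chain into two sub-chains on $[1,y-1]$ and $[y+1,L]$, but these are glued at $y$ through AT moves that displace the distinguished card itself; controlling this boundary coupling uniformly in $y$ while keeping the overall constant of order $L^2$ is the delicate technical heart of the argument, and it is at this boundary term that one invokes the two-color estimate. Once this is done, combining with the Dirichlet form comparison and taking $\mu$-expectation over $(N_1,\dots,N_d)$ yields $\mu(\ent_d(f))\le C' L^2\,\cE(\sqrt f,\sqrt f)$, as claimed.
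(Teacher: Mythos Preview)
Your reduction to the adjacent-transposition (interchange) process on $S_L$ via the contraction principle is exactly what the paper does, and your Dirichlet-form comparison $\mu\big(\cE^{(d)}_{\mathrm{AT}}(\sqrt f,\sqrt f)\big)\le C_d\,\cE(\sqrt f,\sqrt f)$ is correct; the paper leaves this last comparison implicit when it passes from the bound $\ent_d(f)\le CL^2\sum_x\mu^{(d)}[(\nabla_{x,x+1}\sqrt f)^2]$ to \eqref{recur20}.

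Where you diverge is in the inductive proof of $\alpha(\Gamma_L)^{-1}=O(L^2)$, and here your sketch has a structural problem rather than merely a missing detail. Conditioning on the position $y$ of a distinguished card does \emph{not} split the conditional measure into a product of two smaller interchange problems: given that card~$1$ sits at $y$, the remaining $L-1$ labels are placed uniformly in $\{1,\dots,L\}\setminus\{y\}$, and \emph{which} of them land in $[1,y-1]$ versus $[y+1,L]$ is itself random. The Dirichlet form restricted to swaps avoiding position $y$ is the interchange process on the disconnected graph $[1,y-1]\cup[y+1,L]$ carrying all $L-1$ remaining labels; this chain is reducible, hence has log-Sobolev constant~$0$, and the inductive hypothesis cannot be applied to ``the two resulting sub-permutations'' separately. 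A further conditioning on the left/right label partition would restore the product structure, but the marginal of that partition then cannot be controlled by adjacent transpositions that avoid~$y$. This is not the ``boundary coupling'' you flag; it is a failure of the decomposition itself.

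The paper's recursion sidesteps this by conditioning instead on the \emph{set of labels} occupying the fixed block $[1,\lfloor n/2\rfloor]$. Given this set, the conditional measure genuinely factors as a product of uniform measures on $S_{\lfloor n/2\rfloor}\times S_{\lceil n/2\rceil}$, and tensorisation plus the inductive hypothesis handles it. The marginal (which labels lie in the left block) is the two-colour exclusion on the \emph{complete} graph $K_n$; the paper invokes the Lee--Yau bound $\alpha(K_n,n_1,n-n_1)\asymp n$ for $n_1\sim n/2$, transfers the Dirichlet form via the crude comparison $\cE_{K_n}\le Cn^3\,\cE_{\Gamma_n}$, and obtains the recursion $\alpha(n)^{-1}\le \alpha(\lfloor n/2\rfloor)^{-1}+Cn^2$, which iterates to $O(n^2)$. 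The key point is that the spatial split is \emph{fixed} and all the randomness is pushed into the label partition, for which a complete-graph estimate is available; your scheme places the randomness in the spatial split, where no such tool applies.
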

The above theorems then allow us to conclude  that the log-Sobolev constant in \eqref{lsc} satisfies $$\tfrac1{\a(L)}\leq (C\,c_2' + c_1') L^2,$$ which ends the proof. The following subsections are devoted to the proof of Theorem \ref{rec_th} and Theorem \ref{AT_th}.

    \subsection{Log-Sobolev inequality for a brith and death chain}\label{secMi}
The starting point in the proof of Theorem \ref{rec_th} is an application of a criterion for log-Sobolev inequalities in birth and death chains due to L.\ Miclo \cite{Mi}. For this purpose we follow \cite{CMR}. 
%
%
%
     In the definition below, we consider a generic  probability measure $\gga$ on 
      the finite set of integers $S:=\{n_{\min},n_{\min}+1,\ldots, n_{\max} \}$, for some $n_{\max} >n_{\min}$. 
     
   \begin{defi}[Condition {\rm Conv}$(c,\bar n)$]
    
     We say that $\gga$ satisfies the convexity hypothesis with parameters $c > 0$ and $\bar n \in S$, which we denote by {\rm Conv}$(c,\bar n)$, if 
     $c^{-1} \bar n \leq n_{\max} - \bar n \leq c \bar n$ and the same inequality holds for $\bar n - n_{\min}$. Furthermore, for any 
      $n \geq \bar n$:
      \begin{equation}\label{conv1}
       \frac{\gga(n+1)}{\gga(n)} \leq c\, e^{-\frac{n-\bar n}{c\bar n}},
      \end{equation} 
     and   for any $n \leq \bar n$, 
        \begin{equation}\label{conv2}
           \frac{\gga(n-1)}{\gga(n)} \leq c\, e^{-\frac{\bar n-n}{c\bar n}}.         
        \end{equation} 
       Finally for any $n \in S$, 
       \begin{equation}\label{conc}
        \frac{1}{c\sqrt{\bar n}}\, e^{-\frac{c(\bar n-n)^{2}}{\bar n}} \leq \gga(n) \leq  \frac{c}{\sqrt{\bar n}}\, e^{-\frac{(\bar n-n)^{2}}{c \bar n}}.
        \end{equation} 
   \end{defi}
The following useful lemma appears in \cite{CMR}. We use the notation $a\wedge b=\min\{a,b\}$.
  \begin{lem}[Proposition A.5 in \cite{CMR}]\label{cmr_prop}
If $\g$ satisfies {\rm Conv}$(c,\bar n$), then for all functions
      $g:S\mapsto \bbR_+$,
      \begin{equation}\label{logsobbide}
       \ent_{\gga}(g) \leq C \,\bar n \sum_{n=n_{\min}+1}^{n_{\max}} [\gga(n) \wedge \gga(n-1)] \Big(\sqrt{g(n)} - \sqrt{g(n-1)}\Big)^{2},
      \end{equation}    
where the constant $C$ depends only on $c$ and not on $\bar n$.  
      \end{lem}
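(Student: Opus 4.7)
The plan is to invoke L.\ Miclo's sharp criterion for the log-Sobolev constant of a birth-and-death chain \cite{Mi}. Applied to the nearest-neighbour chain on $S$ whose Dirichlet form appears on the right-hand side of \eqref{logsobbide} (edge weights $\gga(n)\wedge\gga(n-1)$, reversible measure $\gga$), that criterion says that the inverse of the optimal constant in \eqref{logsobbide} is comparable, up to universal multiplicative constants, to $B_{+}+B_{-}$, where
\begin{equation*}
B_{+} \,:=\, \sup_{n > \bar n} \left(\sum_{k=\bar n}^{n-1} \frac{1}{\gga(k)\wedge\gga(k+1)}\right) \gga([n,n_{\max}])\,\log\!\frac{1}{\gga([n,n_{\max}])},
\end{equation*}
and $B_{-}$ is the analogous quantity on $\{n_{\min},\ldots,\bar n\}$. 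It therefore suffices to show $B_{\pm} = O(\bar n)$ with constants depending only on $c$.

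By the symmetry between \eqref{conv1} and \eqref{conv2} it is enough to treat $B_{+}$. Write $m = n-\bar n$, and recall that $n_{\max}-\bar n \asymp \bar n$ by hypothesis. The lower bound in \eqref{conc} gives $1/(\gga(k)\wedge\gga(k+1)) \leq c\sqrt{\bar n}\exp(c(k+1-\bar n)^{2}/\bar n)$, so a standard discrete Laplace estimate bounds the Hardy-type sum by $C\sqrt{\bar n}\cdot(\bar n/m)\exp(cm^{2}/\bar n)$ for $m \gtrsim \sqrt{\bar n}$ and by $Cm\sqrt{\bar n}$ in the complementary regime. For the tail, I iterate the log-derivative bound \eqref{conv1} from $n$ upward: for $j \geq 1$,
\begin{equation*}
\frac{\gga(n+j)}{\gga(n)} \,\leq\, \exp\!\left(j\log c \,-\, \frac{mj}{c\bar n} \,-\, \frac{j(j-1)}{2c\bar n}\right),
\end{equation*}
and summing on $j\geq 0$ yields a Gaussian tail $\gga([n,n_{\max}]) \leq C\exp(-m^{2}/(c'\bar n))$ with a constant $c' < c$, i.e.\ a strictly faster decay than the one coming from the pointwise upper bound in \eqref{conc} alone. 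In particular $\log(1/\gga([n,n_{\max}])) \leq C(m^{2}/\bar n + \log\bar n)$. Multiplying the three factors and taking the supremum over $m$, the growing exponential $e^{cm^{2}/\bar n}$ from the Hardy sum is killed by $e^{-m^{2}/(c'\bar n)}$ from the tail with margin to spare, and what remains is at most a constant times $\bar n$.

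The main obstacle is precisely this exponent bookkeeping: reciprocating the \emph{lower} Gaussian bound in \eqref{conc} inserts the factor $e^{+cm^{2}/\bar n}$ into the Hardy sum, and one cannot absorb it using the \emph{upper} bound in \eqref{conc} alone, whose exponent is only $1/(c\bar n)$ and need not be smaller. This is exactly why Conv$(c,\bar n)$ imposes the multiplicative bounds \eqref{conv1}-\eqref{conv2} in addition to the two-sided pointwise Gaussian estimate \eqref{conc}: iterating them gives a tail estimate with an essentially optimal Gaussian exponent, strictly faster than $e^{-cm^{2}/\bar n}$, which is what ultimately forces $B_{+} = O(\bar n)$. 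Once this exponent comparison is in hand the remainder of the argument is routine Gaussian-tail bookkeeping.
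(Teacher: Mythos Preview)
The paper does not supply its own proof here; the lemma is quoted from \cite{CMR}. Your overall plan --- verify Miclo's Hardy-type criterion by showing $B_\pm=O(\bar n)$ --- is the natural one and matches that source, but the exponent bookkeeping you outline does not close, and the gap lies exactly at the point where you claim to resolve it.

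Iterating \eqref{conv1} from $n=\bar n+m$ gives $\gga(n+j)/\gga(n)\le c^{j}\exp(-jm/(c\bar n)-j(j-1)/(2c\bar n))$; once $ce^{-m/(c\bar n)}<1$ the sum over $j$ is $O(1)$, so $\gga([n,n_{\max}])\le C\gga(n)$, and the only absolute bound then available on $\gga(n)$ is the upper half of \eqref{conc}, which yields a tail of order $e^{-m^{2}/(c\bar n)}$. That is $c'=c$, not $c'<c$. (Iterating \eqref{conv1} from $\bar n$ instead gives the \emph{worse} Gaussian exponent $1/(2c)$, together with an uncontrolled $c^{m}$ prefactor.) Worse, even $c'<c$ would not suffice: to cancel the Hardy-sum growth $e^{cm^{2}/\bar n}$ coming from the lower half of \eqref{conc} you need $1/c'>c$, i.e.\ $c'<1/c$, which is strictly stronger since the definition forces $c\ge 1$. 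Bounding $H(n)$ via the lower bound in \eqref{conc} and $T(n)$ via the upper bound (or via \eqref{conv1}) therefore cannot work --- the two exponents $c$ and $1/c$ are on the wrong sides of each other. The remedy is not a sharper tail estimate but to avoid decoupling $H$ and $T$ in the first place: write $H(n)T(n)$ as a double sum of ratios $\gga(\bar n+b)/\gga(\bar n+a)$ with $a<m\le b$ and bound each ratio directly by iterating \eqref{conv1} from $a$ to $b$; since the same measure appears in numerator and denominator, the mismatch between the two Gaussian scales in \eqref{conc} never enters.
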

      We shall prove that  the number of particles of a given type has a distribution with the properties described above.  Fix $i\in\{0,\dots,d-1\}$, and fix nonnegative integers
      $n_1,\dots,n_{i}$ such that $\sum_{j=1}^{i}n_j\leq L/2$.  Set $S=\{0,\dots,L_{i+1}/2\}$, where we define $L_{i+1}:=L - 2 \sum_{j=1}^{i}n_j$. Let $\g$ denote the probability on $S$:
      \begin{equation}\label{defnu}
      \g(n): = \mu(N_{i+1}=n\,|\,N_1=n_1,\dots,N_{i}=n_{i})=\mu^{(i)}(N_{i+1}=n).
      \end{equation}
As an immediate corollary of  Proposition \ref{birthanddeathfin}  in the appendix (simply replace $L$ by $L_i$ and $d$ by $d-i$ there), we have that the measure $\g$ defined in \eqref{defnu} satisfies {\rm Conv}$(c,\bar n$) for some absolute constant $c>0$, with $\bar n:=L_{i+1}/2(d-i)$. 
                   Therefore, for any $f:\O_L\mapsto \bbR_+$, it follows from Proposition \ref{birthanddeathfin} and Lemma \ref{cmr_prop} applied to $g(n)=\mu^{(i)}(f\tc N_{i+1}=n)$, that 
  \begin{equation}\label{recur2}
     \ent_{i} (\mu^{(i+1)}(f)) \leq  \frac{C\,L_{i+1}}{2(d-i)}\sum_{n=1}^{L_{i+1}/2}[\g(n)\wedge\g(n-1)]
     \,\r\left({\mu^{(i)}(f\tc n)},{\mu^{(i)}(f\tc n-1)}\right)\,,
     \end{equation} 
where $C>0$ is a constant and we use the notation $\mu^{(i)}(f\tc n):=\mu^{(i)}(f\tc N_{i+1}=n)$
and
 \begin{equation}\label{def_rho} 
 \rho(a,b):=\big(\sqrt{a} - \sqrt{b}\big)^{2}\,,\qquad a,b\geq 0.
            \end{equation}
              
    To proceed towards the proof of Theorem \ref{rec_th} we now estimate the right hand side of \eqref{recur2}. 
     \subsection{Decomposition of  $\r\left({\mu^{(i)}(f\tc n)},{\mu^{(i)}(f\tc n-1)}\right)$.}\label{partbide}
   We need to introduce some more notation. Suppose $u,v\in\{1,\dots,L\}$ and $\eta\in\O_L$ is such that $\z_u=-\z_v=e_j$, for some $j\in\{1,\dots,2d\}$, where $\z=\nabla\eta$ as in \eqref{defzeta}. For any $\ell\in\{1,\dots,2d\}$,  we define the operator $T_{u,v}^{*,\ell}$ by
   \begin{equation}\label{def_tstar} 
 T_{u,v}^{*,\ell}f(\eta) =
 f(\eta^{*,\ell}_{u,v}),
            \end{equation}
where $  \eta^{*,\ell}_{u,v}$ denotes the configuration $\eta'\in\O_L$ that is equal to $\eta$ except that the pair $(\z_u,\z_v)=(e_j,-e_j)$ has been replaced by the pair $(\z'_u,\z'_v)=(e_\ell,-e_\ell)$. Notice that
  this operation is well defined, producing a valid element of $\O_L$, whenever the configuration $\eta$ satisfies $\z_u=-\z_v$.
  Moreover, for any fixed $u,v\in\{1,\dots,L\}$, any $i\in\{0,\dots,d-2\}$, and $\ell\in  A_{i} := \{i+2, \ldots, d \} \cup \{ d + i+2,\ldots,2d \}$, $n\in\{1,\dots,L_{i+1}/2\}$, 
  from the uniformity of $\mu$ we obtain the change of variable formula:
  \begin{equation}\label{chgvar}
\mu^{(i)}\left(f\,\ind_{\zeta_u=-\z_v=e_{i+1}}\ind_{N_{i+1}=n}\right) = \mu^{(i)}\left(T_{u,v}^{*,i+1}f\,\ind_{\zeta_u=-\z_v=e_{\ell}}\ind_{N_{i+1}=n-1}\right),
\end{equation}  
  for any function $f$. Since $\sum_{u,v=1}^{L} \ind_{\z_{u}=-\z_v=e_{i+1}} = N_{i+1} ^2$ one has 
  \begin{align}\label{mfsw}
    & \mu^{(i)}(f\tc       n)  =\frac{1}{\g(n)}\,\mu^{(i)}\left[ f \,\ind_{N_{i+1}=n}\right] = \frac{1}{n^2\g(n)} \sum_{u,v=1}^{L}  \mu^{(i)}\left[ f \,
\ind_{\z_{u}=-\z_v=e_{i+1}} \ind_{N_{i+1}=n}\right]   \nonumber\\
  & = \frac{1}{2(d-i-1)n^2\g(n)} \sum_{u,v=1}^{L} \sum_{\ell\in A_{i}} \mu^{(i)}\left[ T_{u,v}^{*,i+1}f  \,\ind_{\z_{u}=-\z_v=e_{\ell}} \ind_{N_{i+1}=n-1}\right] \nonumber  \\
  & = \frac{\g(n-1)}{2(d-i-1)n^2\g(n)} \sum_{u,v=1}^{L} \sum_{\ell\in A_{i}} \mu^{(i)}\left[ T_{u,v}^{*, i+1}f  \,\ind_{\z_{u}=-\z_v=e_{\ell}} \,|\,N_{i+1}=n-1\right] .  \end{align}
 We introduce the notation
   \begin{equation}\label{def_chi}
    \begin{aligned}
         & \chi_{u,v,\ell} = \frac{\g(n-1)}{2(d-i-1)n^2\g(n)} \ind_{\z_{u}=-\z_v=e_{\ell}},\\
   & \chi =  \sum_{u,v=1}^{L} \sum_{\ell\in A_{i}}  \chi_{u,v,\ell},
    \end{aligned}
   \end{equation} 
    so that \eqref{mfsw} takes the more compact form 
     \begin{equation}\label{mfsw1}
      \mu^{(i)}(f\tc n) = \sum_{u,v,\ell}  \mu^{(i)}\left( \chi_{u,v,\ell} T_{u,v}^{*,i+1}f  \tc n-1\right).
     \end{equation} 
   Considering the constant function $f \equiv 1$ one has the normalization
        \begin{equation}\label{renor}
     \mu^{(i)}(\chi\tc n-1) =   \sum_{u,v,\ell} \mu^{(i)}\left( \chi_{u,v,\ell} \tc n-1\right) = 1.
      \end{equation} 
   Moreover,  using symmetry, for any $\ell\in A_i$ we have \begin{equation}\label{conv}
  \mu^{(i)}\left(N_{\ell}^{2}\tc n-1\right) = \frac{n^{2}\g(n)}{\g(n-1)}.
     \end{equation} 
     From \eqref{mfsw1}, using $\r(a,b)\leq 2\r(a,c)+2\r(b,c)$, for any $a,b,c\geq 0$, one has
       \begin{align}\label{rdeco}
     \r\left({\mu^{(i)}(f\tc n)},{\mu^{(i)}(f\tc n-1)}\right)
      & \leq  \textstyle{2\r\left(
        \sum_{u,v,\ell}  \mu^{(i)}\left( \chi_{u,v,\ell} T_{u,v}^{*,i+1}f  \tc n-1\right)
        , \mu^{(i)}(\chi f\tc n-1)\right)} \nonumber \\ &\quad\quad \quad+ 2\r\left({\mu^{(i)}(\chi f\tc n-1)},{\mu^{(i)}(f\tc n-1)}\right) \nonumber\\
        & =: \cA_i(f,n) + \cB_i(f,n)\,.
       \end{align}
The contributions of the two terms above to the expression \eqref{recur2} will be analyzed separately.

   \subsection{Estimating $\sum_n [\g(n-1)\wedge\g(n)]\cA_i(f,n)$. }
   Here we prove the following estimate.
   \begin{prop}\label{cin}
   There exists a constant $C>0$ such that for all $L\in2\bbN$, $i=0,\dots,d-1$, 
for all even $L_{i+1}\in[2,L]$, all integers $n\in[1,L_{i+1}/2]$, and for all functions $f:\O_L\mapsto\bbR_+$, one has
   \begin{align}\label{topropo}
  &L_{i+1} \sum_n [\g(n-1)\wedge\g(n)]\cA_i(f,n) \nonumber 
  \\ & \qquad \leq C L^2 \sum_{x=1}^{L-1}
   \mu^{(i)}[c_x(\nabla_x\sqrt f)^2] 
    + CL \sum_{x=1}^{L-1}\sum_{j=1}^{2d}  \mu^{(i)}[c^{\star, j}_{x}(\nabla_{x}^{*,j}\sqrt f)^2].
   \end{align}
   \end{prop}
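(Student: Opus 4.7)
The plan is to bound $\cA_i(f,n)$ pointwise via convexity and a canonical path construction in $\O_L$, and then apply a congestion estimate. First, by joint convexity of $\rho(a,b)=(\sqrt a-\sqrt b)^2$ on $\bbR_+^2$ together with the normalization \eqref{renor} --- which makes $(\chi_{u,v,\ell})\mu^{(i)}(d\eta\tc n-1)$ a probability measure on $\O_L\times\{1,\dots,L\}^2\times A_i$ --- I get
\[
\cA_i(f,n)\;\le\;2\sum_{u,v,\ell}\mu^{(i)}\bigl(\chi_{u,v,\ell}\,\rho(T^{*,i+1}_{u,v}f,f)\tc n-1\bigr).
\]
Inserting the explicit form \eqref{def_chi} of $\chi_{u,v,\ell}$, using $[\g(n-1)\wedge\g(n)]/\g(n)\le 1$, and summing over $n$ (the factor $\ind_{N_{i+1}=n-1}$ fixes $n=N_{i+1}+1$), one obtains
\[
\sum_n[\g(n-1)\wedge\g(n)]\cA_i(f,n)\;\le\;\frac{1}{d-i-1}\sum_{u,v,\ell}\mu^{(i)}\!\left(\frac{\ind_{\z_u=-\z_v=e_\ell}\,\rho(T^{*,i+1}_{u,v}f,f)}{(N_{i+1}+1)^2}\right).
\]

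Next, for each $(u,v,\ell)$ with $u<v$ and each $\eta$ satisfying $\z_u=-\z_v=e_\ell$, I construct a path $\eta=\eta^{(0)}\to\cdots\to\eta^{(K)}=\eta^{*,i+1}_{u,v}$ of length $K\le C(v-u)$ using only allowed local moves. The backbone is: transport the particle $e_\ell$ at $u$ rightward to position $v-1$ by adjacent swaps; apply the type-change $\nabla^{*,i+1}_{v-1}$ converting the adjacent pair $(e_\ell,-e_\ell)$ into $(e_{i+1},-e_{i+1})$; transport the new particle of type $i+1$ back to $u$ by adjacent swaps. Each \emph{obstacle} (an intermediate position where the neighbor equals $-e_\ell$, forbidding a direct swap) is bypassed by a pair of auxiliary type-changes that temporarily convert and then restore the blocking pair to a different type, so $K$ remains linear in $v-u$; the case $u>v$ is symmetric. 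Cauchy-Schwarz along the path yields
\[
\rho(T^{*,i+1}_{u,v}f,f)(\eta)\;\le\;K\sum_{k=0}^{K-1}\bigl(\sqrt{f(\eta^{(k+1)})}-\sqrt{f(\eta^{(k)})}\bigr)^2,
\]
each summand equal to $(\nabla_x\sqrt f)^2(\eta^{(k)})$ for a swap step or $(\nabla^{*,j}_x\sqrt f)^2(\eta^{(k)})$ for a type-change.

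Finally I interchange the order of summation. For each elementary move $M$ at position $x$, the map $\eta\mapsto\eta^{(k)}$ (with $k$ the step at which $M$ is used in the canonical path for $(u,v,\ell,\eta)$) is a $\mu^{(i)}$-preserving bijection on the relevant subset of $\O_L$; changing variables $\eta^{(k)}\mapsto\eta'$ expresses each summand as a $\mu^{(i)}$-expectation of the Dirichlet-form density $(\nabla_x\sqrt f)^2(\eta')$ or $(\nabla^{*,j}_x\sqrt f)^2(\eta')$ weighted by the flux of paths through $M$. For a fixed $(x,\mathrm{swap})$ move at $\eta'$ the paths are indexed by $u\in\{1,\ldots,x\}$ and $v\in\{x+1,\ldots,L\}$ with $\z'_v=-e_{\ell^*}$ (where $\ell^*$ is determined by $\z'_x$), plus a symmetric return-phase contribution; since $K\le 2(v-u)\le 2L$, the total flux is bounded by $C\cdot xL\cdot N_{\ell^*}(\eta')/(N_{i+1}(\eta')+1)^2$. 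Using the concentration of $\g$ around $\bar n=L_{i+1}/(2(d-i))$ --- concretely $\sum_n\g(n)/(n+1)^2=O(\bar n^{-2})=O(L^{-2})$ from \eqref{conc} --- integration in $\eta'$ yields a per-$x$ contribution of order $x\cdot\mu^{(i)}[c_x(\nabla_x\sqrt f)^2]$; summing over $x$ and multiplying by the outer prefactor $L_{i+1}\le L$ produces the claimed $CL^2$ coefficient. Each canonical path contains $O(1)$ essential type-change moves (the one at position $v-1$ plus $O(1)$ per obstacle), so a parallel lighter congestion count gives the $CL$ coefficient. The main technical difficulty is the canonical-path construction around obstacles and the associated bookkeeping: the detours must add only $O(1)$ steps per obstacle, the step-wise bijections used in the change of variables must be well-defined, and the concentration estimate for $\g$ must absorb the $(N_{i+1}+1)^{-2}$ weight uniformly in $\eta'$.
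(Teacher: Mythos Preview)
Your overall architecture---convexity of $\rho$ followed by a path decomposition of $T^{*,i+1}_{u,v}$ into local moves---matches the paper's, but there is a genuine gap in the final congestion step, and one unnecessary complication.

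\textbf{The obstacle detours are not needed.} The adjacent swap $T_x:\eta\mapsto\eta^x$ is a well-defined bijection on all of $\O_L$, including configurations with $\eta_{x-1}=\eta_{x+1}$; in that case the swap simply exchanges a pair $(e_j,-e_j)$ into $(-e_j,e_j)$, which is one of the $2d$ allowed type-change outcomes. Hence $(\nabla_x\sqrt f)^2$ evaluated at such a configuration is already dominated by the type-change term $c_x^{*,j}(\nabla_x^{*,j}\sqrt f)^2$ in the Dirichlet form. The paper therefore writes the telescopic decomposition of $T^{*,i+1}_{u,v}$ using \emph{plain} swaps throughout, with a single type-change at position $v-1$, and never introduces detours. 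Your obstacle mechanism adds bookkeeping without buying anything.

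\textbf{The concentration step fails.} After your early bound $[\g(n-1)\wedge\g(n)]/\g(n)\le 1$ and summation over $n$, the swap contribution (for the outgoing phase) becomes, up to constants,
\[
L_{i+1}\cdot L\sum_{x}x\;\mu^{(i)}\!\Big[\frac{M_{\ell(x)}(\eta')}{(N_{i+1}(\eta')+1)^{2}}\,(\nabla_x\sqrt f)^2(\eta')\Big],
\]
with $M_{\ell(x)}\le L_{i+1}/2$. You then invoke $\sum_n\g(n)/(n+1)^2=O(L_{i+1}^{-2})$ to claim this is $O\big(x\,\mu^{(i)}[c_x(\nabla_x\sqrt f)^2]\big)$. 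But concentration of $N_{i+1}$ under $\mu^{(i)}$ cannot be used here: the factor $(N_{i+1}+1)^{-2}$ is multiplied by the \emph{arbitrary} nonnegative function $(\nabla_x\sqrt f)^2$, and there is no independence. Take $f$ supported on $\{N_{i+1}=0\}$; since swaps preserve $N_{i+1}$, so is $(\nabla_x\sqrt f)^2$, and then $(N_{i+1}+1)^{-2}\equiv 1$ on the support. Your bound then reads $CL^{3}L_{i+1}\sum_x \mu^{(i)}[(\nabla_x\sqrt f)^2]$ rather than the required $CL^{2}\sum_x\mu^{(i)}[(\nabla_x\sqrt f)^2]$.

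The paper avoids this by \emph{not} summing over $n$ early. It keeps the conditioning $\mu^{(i)}(\cdot\tc n-1)$ (and, after the type-change, $\mu^{(i)}(\cdot\tc n)$) through the change of variables, so that $n$ is deterministic and the weight $1/n^{2}$ can be combined with the retained factor $[\g(n-1)\wedge\g(n)]$. The key pointwise-in-$n$ inequalities, deduced from the appendix estimate $\g(n-1)/\g(n)\le C\,n^{2}/(L_{i+1}-2n)^{2}$, are
\[
\frac{L_{i+1}}{n}\,[\g(n-1)\wedge\g(n)]\le C\,\g(n),
\qquad
\frac{L_{i+1}^{2}}{n^{2}}\,[\g(n-1)\wedge\g(n)]\,\frac{\g(n-1)}{\g(n)}\le C\,\g(n-1),
\]
which convert the weighted conditional expectations back into $\mu^{(i)}[(\nabla_x\sqrt f)^2]$ upon summing over $n$. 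Your bound $[\g(n-1)\wedge\g(n)]\le\g(n)$ discards precisely the small-$n$ damping (of order $n^{2}/L_{i+1}^{2}$) that is needed to cancel the $1/n^{2}$ in $\chi_{u,v,\ell}$. To repair your argument, keep the $n$-sum until after the change of variables and use these two inequalities instead of the concentration heuristic.
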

\begin{proof}   Since $\r:[0,\infty)^2\mapsto\bbR$ is convex, by Jensen's inequality and the expressions \eqref{def_chi} and \eqref{renor},    \begin{align}\label{rconv}
   \cA_i(f,n) \leq 2 \sum_{u,v,\ell}  \mu^{(i)}\left( \r\left(T_{u,v}^{*,i+1}f, f\right) \chi_{u,v,\ell}  \tc n-1\right).
   \end{align}
   We now turn to the estimate of the right hand side of \eqref{rconv}. 
We need to compare the exchanges at positions $u,v$ with local exchanges between adjacent positions.   Fix $u,v$ and assume without loss of generality that $v\geq u+1$.  
 Set $h=\sqrt f$ so that 
$$ \r\left(T_{u,v}^{*,i+1}f, f\right) = \left( \nabla_{u,v}^{*,i+1} h \right)^{2}.$$
The operation 
       $ T_{u,v}^{*,i+1}$ can be implemented by first transferring $\z_{u}$ from position $u$ to position $v-1$,  through a chain of adjacent swaps, then
      applying the operation $T_{v-1,v}^{*,i+1}$ and then finally transferring back  
          the new value of $\z_{v-1}$ from position $v-1$ to position $u$ via adjacent swaps. This can be formalized as follows. 
Let $T_u$ denote the adjacent swap operator that changes $(\z_u,\z_{u+1})$ into $(\z_{u+1},\z_u)$, that is $T_uh(\eta) = h(\eta^u)$ 
          for any function $h:\O_L\mapsto\bbR$; see \eqref{desdyn}. Thus for any function $h$ we can write            
       \begin{align*}
        T_{u,v}^{*,i+1}h & = 
        T_{u} \circ \ldots \circ T_{v-2} \circ T_{v-1,v}^{*,i+1} \circ T_{v-2}
 \circ \ldots \circ T_{u}
  h . 
       \end{align*}
   In terms of the gradient operators $\nabla_u=T_u-1$, $\nabla_{u}^{*,i+1}=T_{u}^{*,i+1}-1$, one has the telescopic sum
         \begin{equation}\label{grdcm}
          \begin{aligned}
    T_{u,v}^{*,i+1}h -h  & = \sum_{j=0}^{v-u-3} 
    \nabla_{u+j} T_{u+j+1}\circ \ldots \circ T_{v-2} \circ T_{v-1,v}^{*,i+1} \circ T_{v-2}
 \circ \ldots \circ T_{u} h\\ 
  &  \quad + \nabla_{v-2} T_{v-1,v}^{*,i+1} \circ T_{v-2}
 \circ \ldots \circ T_{u} h  + \nabla_{v-1}^{*,i+1}T_{v-2}
 \circ \ldots \circ T_{u} h \\
   &  \qquad+ \sum_{j=1}^{v-u-2}  \nabla_{u+j} 
 T_{u+j-1} \circ \ldots \circ T_{u}  h 
   + \nabla_{u} h\,.
   \end{aligned}
         \end{equation} 
       By the uniformity of $\mu^{(i)}$, every term in the first sum in \eqref{grdcm} satisfies
       \begin{align}\label{chgvar0}
    & \mu^{(i)}\left[ \left(\nabla_{u+j} 
 T_{u+j-1} \circ \ldots \circ T_{v-2}\circ T_{v-1,v}^{*,i+1} \circ T_{v-2}\circ\ldots \circ T_{u}  h\right)^{2}  \chi_{u,v,\ell} \tc n-1\right] 
  \nonumber \\
  &\qquad\qquad = \frac{\gga(n)}{\gga(n-1)}\mu^{(i)}\left[ \left(\nabla_{u+j} 
  h\right)^{2}   \chi_{u+j-1,v,i+1} \tc n\right].
   \end{align}  
      The same identity holds for the first term in the second line of \eqref{grdcm}, i.e.\ when $u+j=v-2$.
    In a similar way, for the terms in the last line of \eqref{grdcm}, one obtains \begin{align}\label{chgvar2}
         \mu^{(i)}\left[ \left( \nabla_{u+j}T_{u+j-1}
 \circ \ldots \circ T_{u} h \right)^{2}  \chi_{u,v,\ell} \tc n-1\right] 
  = \mu^{(i)}\left[ \left(\nabla_{u+j} 
  h\right)^{2}   \chi_{u+j-1,v,\ell} \tc n-1\right].
\end{align}
       Finally, for the term involving the gradient $\nabla_{v-1}^{*,i+1}$ one has
   \begin{align}\label{chgvar3}
    \!\!\!\!\!\!  \!\!\!\!\!   \mu^{(i)}\left[ \left( \nabla_{v-1}^{*,i+1}T_{v-2}
 \circ \ldots \circ T_{u} h \right)^{2}  \chi_{u,v,\ell} \tc n-1\right] 
  = \mu^{(i)}\left[ \left(\nabla_{v-1} ^{*,i+1}
  h\right)^{2}   \chi_{v-1,v,\ell} \tc n-1\right].
\end{align}
From \eqref{rconv}-\eqref{chgvar3}, using Schwarz' inequality we have
\begin{align}\label{chgvar4}\cA_i(f,n)& \leq 6L^2 \sum_{x=1}^{L-1}\sum_{v=1}^{L}\sum_{\ell\in A_i} 
\left\{\frac{\g(n)}{\g(n-1)} \mu^{(i)}\left[ \left(\nabla_{x} 
  h\right)^{2}  \chi_{x-1,v,i+1} \tc n\right] + \mu^{(i)}\left[ \left(\nabla_{x} 
  h\right)^{2} \chi_{x-1,v,\ell}\tc n-1\right] \right\}\nonumber\\
   & \;\;\;\qquad+6L\sum_{x=1}^{L-1}\sum_{\ell\in A_i}
\mu^{(i)}\left[ \left(\nabla_{x} ^{*,i+1}
  h\right)^{2}   \chi_{x,x+1,\ell} \tc n-1\right].
\end{align}
Recalling \eqref{def_chi}, one has that for any $x$:
\begin{gather*}
\frac{\gga(n)}{\gga(n-1)} \sum_{v=1}^{L}\sum_{\ell\in A_i} \chi_{x-1,v,i+1} \leq 
\frac{N_{i+1}}{n^2}\,,
\\
\sum_{v=1}^{L}\sum_{\ell\in A_i} \chi_{x-1,v,\ell} \leq \frac{\g(n-1)}{n^{2}\g(n)}  L_{i+1}\,,
 \\
\sum_{\ell\in A_i}\chi_{x,x+1,\ell}\leq 
\frac{\g(n-1)}{n^2\g(n)}\ind_{\z_{x}=-\z_{x+1}}.
\end{gather*}
Next, we claim that 
\begin{gather}\label{claims}
 \frac{L_{i+1}}{n}[\g(n-1)\wedge\g(n)]\,\leq C\g(n)\\
 \label{claimss}\frac{L_{i+1}^2}{n^2}[\g(n-1)\wedge\g(n)]\,\frac{\g(n-1)}{\g(n)}\leq C\g(n-1),
  \end{gather}
for some constant $C>0$. Since $n\leq L_{i+1}$ it is clear that \eqref{claimss} implies \eqref{claims}. On the other hand \eqref{claimss} follows from  
 $$
 \min\Big\{\frac1{n^2},\frac{\g(n-1)}{n^2\g(n)}\Big\}\leq \frac{C}{L_{i+1}^2}, $$
 which is an immediate consequence of the estimate $\g(n-1)/(n^2\g(n))\leq C(L_{i+1}-2n)^{-2}$; see Lemma \ref{le1} in 
 the appendix (applied with $L$ replaced by $L_{i+1}$ and $d$ replaced by $d-i$).
For the first term in \eqref{chgvar4}, using \eqref{claims} we then obtain
 \begin{align*}
 & L_{i+1}\sum_{n=1}^{L_{i+1}/2} \left(\g(n) \wedge\g(n-1)\right) \sum_{x=1}^{L-1}\sum_{v=1}^{L}\sum_{\ell\in A_i} \frac{\g(n)}{\g(n-1)} \mu^{(i)}\left[ \left(\nabla_{x} 
  h\right)^{2}  \chi_{x-1,v,i+1} \tc n\right] \\ & \;\;\;\qquad \leq \sum_{n=1}^{L_{i+1}/2}  \frac{L_{i+1}}{n}[\g(n-1)\wedge\g(n)] \sum_{x=1}^{L-1}  \mu^{(i)}\left[ \left(\nabla_{x} 
  h\right)^{2}  \tc n\right] \nonumber \\ &  \;\;\;\qquad \leq C \sum_{n=1}^{L_{i+1}/2} \sum_{x=1}^{L-1} \g(n) \mu^{(i)}\left[ \left(\nabla_{x} 
  h\right)^{2}  \tc n\right] \leq C\sum_{x=1}^{L-1}\mu^{(i)}\left[ \left(\nabla_{x} 
  h\right)^{2}  \right].
 \end{align*}
For the second  term in \eqref{chgvar4}, using \eqref{claimss} we have
\begin{align*}
 & L_{i+1}\sum_{n=1}^{L_{i+1}/2} \left(\g(n) \wedge\g(n-1)\right) \sum_{x=1}^{L-1}\sum_{v=1}^{L}\sum_{\ell\in A_i} \mu^{(i)}\left[ \left(\nabla_{x} 
  h\right)^{2} \chi_{x-1,v,\ell}\tc n-1\right] \\ & \;\;\;\qquad \leq \sum_{n=1}^{L_{i+1}/2}  \frac{L_{i+1}^2}{n^2}[\g(n-1)\wedge\g(n)]\,\frac{\g(n-1)}{\g(n)} \mu^{(i)}\left[ \left(\nabla_{x} 
  h\right)^{2}  \tc n-1\right] \nonumber \\ &  \;\;\;\qquad \leq C \sum_{n=1}^{L_{i+1}/2} \sum_{x=1}^{L-1} \g(n-1) \mu^{(i)}\left[ \left(\nabla_{x} 
  h\right)^{2}  \tc n-1\right] \leq C\sum_{x=1}^{L-1}\mu^{(i)}\left[ \left(\nabla_{x} 
  h\right)^{2}  \right].
 \end{align*}
Similarly, the last term in  \eqref{chgvar4} satisfies
\begin{align*}
 & L_{i+1}\sum_{n=1}^{L_{i+1}/2} \left(\g(n) \wedge\g(n-1)\right) 
 \sum_{x=1}^{L-1}\sum_{\ell\in A_i}
\mu^{(i)}\left[ \left(\nabla_{x} ^{*,i+1}
  h\right)^{2}   \chi_{x,x+1,\ell} \tc n-1\right]
 \\& \qquad \qquad \qquad\qquad \qquad \qquad\qquad    \leq C
 \sum_{x=1}^{L-1}
\mu^{(i)}\left[ \left(\nabla_{x} ^{*,i+1}
  h\right)^{2} \right].
 \end{align*}
 This ends the proof of Proposition \ref{cin}.
    \end{proof}

   \subsection{
   Covariance estimate }
    Here we estimate the contribution of the second term in \eqref{rdeco}.
   \begin{prop}\label{cino}
   There exists a constant $C>0$ such that for all $L\in2\bbN$, $i=0,\dots,d-2$, 
for all even $L_{i+1}\in[2,L]$, all integers $n\in[1,L_{i+1}/2]$, and for all functions $f:\O_L\mapsto\bbR_+$, one has
   \begin{align}\label{toprop}
  &L_{i+1} \sum_n [\g(n-1)\wedge\g(n)]\cB_i(f,n)
  \leq 
    C \,\mu^{(i)}\left(\ent_{i+1}(f)\right).
   \end{align}
   \end{prop}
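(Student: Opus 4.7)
The plan is to bound the Hellinger-squared quantity $\cB_i(f,n)$ by a squared covariance, and then control that covariance via an entropic Young-type inequality that exploits the near-concentration of $\chi$ around $1$.

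Set $\nu:=\mu^{(i)}(\cdot\tc N_{i+1}=n-1)$. By \eqref{renor} we have $\nu(\chi)=1$; and since $\mu^{(i+1)}(f)$ is constant on $\{N_{i+1}=n-1\}$ with value $\nu(f)$, we have the crucial identity
\[
\nu(\chi f)-\nu(f)=\nu((\chi-1)f)=\cov_\nu(\chi,f).
\]
In particular $\cB_i$ is genuinely a covariance term, vanishing whenever $f$ is $\sigma(N_1,\dots,N_{i+1})$-measurable, as one should expect. Using $(\sqrt a+\sqrt b)^2\geq \max(a,b)\geq \nu(f)$, one gets
\[
\cB_i(f,n)\leq \frac{2\,\cov_\nu(\chi,f)^2}{\nu(f)}.
\]

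Next, the Donsker-Varadhan entropy inequality yields, for any $\alpha>0$,
\[
|\cov_\nu(\chi,f)|\leq \frac{1}{\alpha}\bigl[\,\ent_\nu(f)+\nu(f)\,\Lambda(\alpha)\,\bigr], \qquad \Lambda(\alpha):=\max_{\pm}\log\nu\bigl(e^{\pm\alpha(\chi-1)}\bigr).
\]
The technical heart of the proof is then the Bernstein-type bound
\[
\Lambda(\alpha)\leq \frac{K\alpha^2}{L_{i+1}}\qquad\text{for }0<\alpha\leq\alpha_0(d),
\]
for constants $\alpha_0,K$ depending only on $d$. I would derive it from the explicit representation $\chi-1=\tfrac{1}{d-i-1}\sum_{k=i+2}^d(N_k^2/\nu(N_k^2)-1)$ together with (i)~the uniform bound $|\chi-1|\leq d-1$, which follows from $\sum_{k} N_k^2\leq (\sum_{k}N_k)^2$ combined with $\nu(N_k^2)\geq \nu(N_k)^2$; and (ii)~the variance estimate $\var_\nu(\chi)\leq C/L_{i+1}$, obtained by a direct moment computation under the squared-multinomial law of $(N_{i+2},\dots,N_d)$ conditioned on $\nu$, in the same spirit as \eqref{conv}.

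With these ingredients, taking the balanced choice $\alpha=\min\{\alpha_0,\sqrt{L_{i+1}\ent_\nu(f)/(K\nu(f))}\}$, and complementing it with the trivial bound $|\cov_\nu(\chi,f)|\leq (d-1)\nu(f)$ to cover the regime where $\ent_\nu(f)/\nu(f)$ is very large, one obtains
\[
L_{i+1}\,\cB_i(f,n)\leq C(d)\,\ent_\nu(f).
\]
Summing over $n$ using $[\g(n-1)\wedge\g(n)]\leq \g(n-1)$, together with the identity $\mu^{(i)}(\ent_{i+1}(f))=\sum_m\g(m)\ent_{\nu_m}(f)$ obtained after relabelling $m=n-1$ (with $\nu_m:=\mu^{(i)}(\cdot\tc N_{i+1}=m)$), then gives the proposition. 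The main obstacle is the Bernstein estimate on $\Lambda(\alpha)$: the squared-multinomial law is non-standard, and one has to handle with care the boundary values of $n$ (near $0$ or $L_{i+1}/2$) where the mean or variance of the $N_k$'s degenerates.
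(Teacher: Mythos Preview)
Your overall strategy---bound $\cB_i$ by the squared covariance divided by $\nu(f)$, then control the covariance via the entropic (Donsker--Varadhan) inequality together with a Laplace transform estimate for $\chi-1$---is exactly the paper's. However, there are two related gaps.

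First, the claimed variance bound $\var_\nu(\chi)\leq C/L_{i+1}$ is false uniformly in $n$. Under $\nu=\mu^{(i)}(\cdot\tc N_{i+1}=n-1)$ the particles of types $i+2,\dots,d$ live in a subsystem of effective size $L_{i+1}-2(n-1)$, and the computation (Lemma~\ref{le2} applied at that scale, as the paper does) gives $\var_\nu(\chi)\asymp (L_{i+1}-2n)^{-2}$. When $n$ is close to $L_{i+1}/2$ this is of order one, and your pointwise target $L_{i+1}\,\cB_i(f,n)\leq C\,\ent_\nu(f)$ fails: taking $f=1+\e(\chi-1)$ for small $\e$ one has $\cB_i\asymp\e^2$ and $\ent_\nu(f)\asymp\e^2$, so the ratio $L_{i+1}\cB_i/\ent_\nu(f)$ is of order $L_{i+1}$. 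The paper repairs this by replacing $1/L_{i+1}$ with the $n$-dependent scale
\[
\D(n,L_{i+1}):=\frac{1}{L_{i+1}}\Big(1\vee\frac{\g(n-1)}{\g(n)}\Big)
\]
and proving $\log\nu(e^{t(\chi-1)})\leq Ct^2\D$ (Proposition~\ref{laptr}). The point is the algebraic identity $L_{i+1}\,\D\,[\g(n-1)\wedge\g(n)]=\g(n-1)$, which makes the sum over $n$ collapse exactly as desired even though $\D$ may be much larger than $1/L_{i+1}$.

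Second, restricting the Laplace bound to $|\alpha|\leq\alpha_0$ is not enough. The balancing value of $\alpha$ in your argument is $\sqrt{\ent_\nu(f)/(\D\,\nu(f))}$, which can be as large as order $\D^{-1}$, hence up to order $L_{i+1}$ for typical $n$. Your trivial bound $|\cov_\nu(\chi,f)|\leq(d-1)\nu(f)$ only helps once $\ent_\nu(f)/\nu(f)\gtrsim\D^{-1}$, leaving the wide intermediate range $\ent_\nu(f)/\nu(f)\in[c_1\D,\,c_2\D^{-1}]$ uncovered. The paper therefore establishes the Laplace estimate for \emph{all} $t\in\bbR$, which requires a nontrivial case analysis in $t$ and $n$ (Section~\ref{laptrsec}); this is precisely where the ``boundary values of $n$'' issue you flag actually gets resolved, and it is the technical heart of the proof.
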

\begin{proof}  Note that we may assume that $i\leq d-3$ here since otherwise the function $\chi$ in \eqref{def_chi} is deterministically equal to $1$ under the measure $\mu^{(i)}[\cdot\tc n-1]$, and therefore $\cB_i(f,n)=0$ for all $n$ and $f$. 
Using the
     inequality $$ \left(\sqrt{x} - \sqrt{y}\right)^{2} \leq \frac{\left(x-y\right)^{2}}{x \vee y},$$ valid for $x,y \geq 0$, one has 
     \begin{equation}\label{bi1}
     \cB_i(f,n)\leq 2\,\frac{\mu^{(i)}(f(\chi-1)\tc n-1)^2}{\mu^{(i)}(f\tc n-1)}\,.
     \end{equation} 
     Since by \eqref{renor} $\chi$ satisfies $\mu^{(i)}(\chi\tc n-1)=1$, one has
     $$\mu^{(i)}(f(\chi-1)\tc n-1)=\cov_i(f,\chi\tc n-1),$$
     where $\cov_i(\cdot\tc n-1)$ denotes covariance with respect to $\mu^{(i)}(\cdot\tc n-1)$.
     Let us define $$
\D(n,L_{i+1})=\frac{1}L_{i+1}\,\Big(1\vee\frac{\g(n-1)}{\g(n)}\Big).
$$
We are going to prove that for some constant $C>0$ one has
\begin{equation}\label{top}
\cov_i(f,\chi\tc n-1)^2\leq C\,\D(n,L_{i+1})\,\mu^{(i)}(f\tc n-1)
\ent_i(f\tc n-1),
\end{equation}
where $\ent_i(\cdot\tc n-1)$ stands for the entropy with respect to $\mu^{(i)}(\cdot\tc n-1)$.

If \eqref{top} holds, then \eqref{bi1} implies 
$$
L_{i+1}  [\g(n-1)\wedge\g(n)] \cB_i(f,n)\leq 2C
\g(n-1)\ent_i(f\tc n-1).
$$
Using $$\sum_{n}\g(n-1)\ent_i(f\tc n-1) = \mu^{(i)}(\ent_{i+1}(f)),$$ 
we obtain the desired inequality \eqref{toprop}. Thus, it suffices to prove \eqref{top}.

To prove \eqref{top}, by homogeneity, we may assume without loss of generality that
$ \mu^{(i)}(f\tc n-1)=1$. In Proposition \ref{laptr} below we establish that for some constant $C_1>0$ one has the Laplace transform bound
\begin{equation}\label{laptr1}
\log\mu^{(i)}\left(e^{t(\chi-1)}\tc  \,n-1\right)\leq C_1\,t^2\,\D(n,L_{i+1})\,,\qquad t\in\bbR.
\end{equation}
  We remark that \eqref{top} follows easily from \eqref{laptr1}. 
Indeed, set for simplicity $\nu:=\mu^{(i)}(\cdot\tc n-1)$ and write $\ent_\nu(\cdot)$ for the corresponding entropy. The  variational principle for entropy implies 
that for any $f\geq 0$ with $\nu(f)=1$ one has
$$\ent_\nu(f)=\nu(f\log f) \geq \nu(fh) - \log \nu(e^h)\,,
$$
for any function $h$. Therefore
$$
\nu(f(\chi-1))\leq \tfrac1s\,\ent_\nu(f)+ \tfrac1s\log \nu\left(e^{s(\chi-1)}\right)\leq \tfrac1s\,\ent_\nu(f)
+ C_1\,s\,\D\,,
$$
for all $s>0$, where we write $\D=\D(n,L_{i+1})$ and we use \eqref{laptr1} with $t=s$. Using also \eqref{laptr1} with $t=-s$ one concludes that
$$
|\nu(f(\chi-1))|\leq  \tfrac1s\,\ent_\nu(f)
+ C_1\,s\,\D\,,
$$
for all $s>0$. 
Setting $s=\sqrt{\D^{-1}\ent_{\nu}(f)}$ one 
obtains 
$$
\nu(f(\chi-1))^2\leq (1+C_1)^2\D\,\ent_{\nu}(f),
$$
which is the desired estimate \eqref{top}. It remains to prove \eqref{laptr1}.
\end{proof}

\subsection{Laplace transform estimate}\label{laptrsec}
Here we prove \eqref{laptr1}. We use as above the shorthand notation $\nu=\mu^{(i)}(\cdot\tc n-1)$ and $\D=\D(n,L_{i+1})$.
   \begin{prop}\label{laptr}
   There exists a constant $C_1>0$ such that for all $t\in\bbR$ one has
      \begin{align}\label{laptr2}
      \log\nu\left(e^{t(\chi-1)}\right)\leq C_1\,t^2\,\D.
   \end{align}
   \end{prop}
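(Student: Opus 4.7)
The plan is to exploit the explicit form of $\chi$ together with concentration of the marginals $N_j$ under $\nu$. Using that $\sum_{u,v=1}^L\ind_{\zeta_u=-\zeta_v=e_\ell}$ equals $N_{j}^2$ whenever $\ell\in\{j,j+d\}$, and that $A_i$ visits each axis $j\in\{i+2,\ldots,d\}$ exactly twice, one rewrites
\[
\chi=\frac{r}{k n^2}\sum_{j=i+2}^d N_j^2,\qquad r:=\gamma(n-1)/\gamma(n),\ \ k:=d-i-1\ge 2,
\]
with the normalisation $\nu(N_j^2)=n^2/r$ furnished by \eqref{conv}. Setting $Y_j:=N_j-M/k$ with $M:=L_{i+1}/2-(n-1)$, symmetry gives $\nu(Y_j)=0$, and the determinism of $\sum_j N_j=M$ forces $\sum_jY_j=0$, so $\sum_j N_j^2 = M^2/k+\sum_j Y_j^2$ and
\[
\chi-1=\frac{r}{k n^2}\Bigl(\sum_{j=i+2}^d Y_j^2-\nu\bigl[\textstyle\sum_j Y_j^2\bigr]\Bigr).
\]
Thus the Laplace bound reduces to a bound on the MGF of a centred quadratic form in the $Y_j$.

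For this, I would exploit that the joint law of $(N_{i+2},\ldots,N_d)$ under $\nu$ is that of the type counts of an auxiliary polymer of length $2M$ over $k$ directions (density $\propto\binom{M}{m_{i+2},\ldots,m_d}^2$). Applying Proposition \ref{birthanddeathfin} to this reduced system yields Conv$(c,M/k)$ for each marginal; together with Lemma \ref{cmr_prop} and standard consequences of the birth-and-death log-Sobolev inequality, this gives sub-Gaussian concentration $\nu(e^{sY_j})\le \exp(C s^2 M/k)$ for $|s|$ in a suitable range, and hence sub-exponential control of $Y_j^2-\nu(Y_j^2)$.

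To tensorise these one-dimensional estimates into a bound on the MGF of $\sum_j Y_j^2$, I would condition iteratively: conditional on $N_{i+2}$, the law of $(N_{i+3},\ldots,N_d)$ is again of the squared-multinomial form with parameters $M-N_{i+2}$ and $k-1$, so Proposition \ref{birthanddeathfin} and Lemma \ref{cmr_prop} apply at each stage. The constraint $\sum_j Y_j=0$ produces only negative correlations and can only help. Telescoping then yields $\log\nu\bigl(e^{\lambda(\sum_j Y_j^2-\nu[\sum_j Y_j^2])}\bigr)\lesssim \lambda^2 M^2/k$; substituting $\lambda=tr/(kn^2)$ and using $n^2/r=\nu(N_j^2)\asymp(M/k)^2+M/k$ reduces the proof to checking algebraically that $r M^2/(k^3 n^4)\lesssim\Delta=(1\vee r)/L_{i+1}$. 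This holds in the regime $n\le\bar n$ where $r\asymp 1$ and $n\asymp L_{i+1}/k$, while in the upper tail $n>\bar n$, $r$ may blow up but $M$ shrinks; in the extreme case $M=O(1)$ one uses the crude deterministic bound $|\chi-1|\lesssim k$ together with the same MGF estimate to recover $C_1 t^2\Delta$.

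The main obstacle is the tensorisation step. The joint density $\binom{M}{m}^2$ lacks a product structure, so the MGF of $\sum_j Y_j^2$ does not factor directly; the iterative conditioning scheme must preserve the squared-multinomial form and the Conv parameters at every stage, and the bookkeeping of the prefactor $r$ has to be tight enough to recover the sharp $(1\vee r)/L_{i+1}$ rather than a weaker bound with $r^2$. Verifying this requires a careful pass through the recursion, in particular in the upper-tail regime where $n$ approaches $L_{i+1}/2$ and the interplay between the smallness of $M$ and the size of $r$ determines the final constant.
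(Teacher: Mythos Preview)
Your decomposition of $\chi-1$ as a centred quadratic form in $Y_j=N_j-M/k$ matches the paper. The divergence, and the gap, is at your tensorisation step. You correctly identify the non-product structure of the joint law as the main obstacle, but leave it unresolved; the paper sidesteps it entirely with a one-line trick you are missing. Writing $\chi-1=\frac1k\sum_{j=i+2}^d Z_j$ with $Z_j:=(Y_j^2-\sigma^2)/\nu[N_j^2]$, the symmetry of $\nu$ under permutations of the types $i+2,\dots,d$ forces all $Z_j$ to share the \emph{same} marginal law under $\nu$. One iterated application of H\"older's inequality (peel off one $Z_j$ at a time with conjugate exponents $k$ and $k/(k-1)$, then $k-1$ and $(k-1)/(k-2)$, and so on) gives directly $\nu\big(e^{t(\chi-1)}\big)\le\nu\big(e^{tZ}\big)$ for $Z:=Z_{i+2}$. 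The problem is thus reduced to a single one-dimensional Laplace bound, with no conditioning recursion, no bookkeeping of shrinking Conv parameters, and no appeal to negative correlation (which, incidentally, does not obviously ``help'' for the MGF of a quadratic form).

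For the one-variable bound the paper does not go through LSI/Herbst but through a direct case analysis in $t$ and $M\sim L_{i+1}-2n$: the deterministic bound $Z\le 4k^2$ handles $t\ge a\Delta^{-1}$; a crude second-moment estimate handles $t\le b$; and the intermediate window is treated by splitting the sum over the values of $\bar N_{i+2}$ into a core $|\bar N_{i+2}|\le M^{1/2+\delta}$ (Taylor-expand $e^{ty_k}$ to third order) and a Gaussian tail, using the pointwise bounds on $\nu(\bar N_{i+2}=\cdot)$ from Proposition~\ref{birthanddeathfin}. Your sub-exponential route could plausibly also close this one-variable step, but note it only yields the quadratic bound for $|\lambda|\lesssim 1/M$; you would still need the deterministic $|\chi-1|\lesssim k$ bound, combined with $t\gtrsim\Delta^{-1}$, to cover large $t$ for all $M$, not only the edge case $M=O(1)$ you mention.
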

\begin{proof}
From \eqref{def_chi}-\eqref{conv} we have
$$
\chi -1= \frac{\sum_{j=i+2}^{d} (N_{j}^{2}-\nu[N_{j}^{2}])}{ (d-i-1) \nu[N_{i+2}^{2}]}.
$$
Define the centered variables $\bar N_j:=N_j-\nu[N_{j}]$, where $\nu[N_{j}]=(L_{i+1}-2(n-1))/2(d-i-1)$. Observe that 
by the conservation laws one has $\sum_{j=i+2}^{d} N_j = (L_{i+1}-2(n-1))/2$ and therefore 
$\sum_{j=i+2}^{d} \bar N_j = 0$.  
From these relations we see that 
$$
\chi -1=\frac1{d-i-1}\sum_{j=i+2}^{d}Y_j,\qquad Y_j:= \frac{\bar N_j^2 - \si^2}{ \nu[N_{j}^{2}]},
$$
where we define the variance  $\si^2:= \nu[\bar N_j^2]$. 
Using 
 H\"older's inequality, we get:
 \begin{align*}
 \nu\left(e^{t\left(\chi  -1\right)} \right) \leq \nu \left(e^{tY_{i+2} } \right)^{\tfrac{1}{d-i-1}}
            \nu\left(e^{\tfrac{t}{d-i-2}\sum_{j=i+3}^{d}Y_j} \right)^{\tfrac{d-i-2}{d-i-1}}.
   \end{align*}
Since all $Y_j$ have the same distribution under $\nu$, say $Y:=Y_{i+2}$, iterating the above inequality we see that it is sufficient to prove that there exists  some constant $C>0$ such that for all $t\in\bbR$ one has 
 \begin{align}\label{unidim}
      \log\nu\left(e^{tY}\right)\leq C\,t^2\,\D.
   \end{align}
The proof of \eqref{unidim} is divided into several steps, corresponding to different sets of values for the parameters $t$ and $n$.  

For simplicity, we only consider the case $t\geq 0$. The case $t\leq 0$ follows with the very same arguments. 
We often write $C,C_1,C_2,\dots$ for positive constants that are independent of the parameters $n,L_{i+1},L$ etc.\ but may depend on $d$.   Their value may change from line to line.

From Lemma \ref{le2} in the appendix we know that $\si^2$ is proportional to $(L_{i+1} - 2n)$. Notice that   \begin{align}\label{secm}
\nu[N_{i+2}^2]\geq \nu[N_{i+2} ]^2 =\left(\tfrac{L_{i+1}-2(n-1)}{2(d-i-1)}\right)^{2},
\end{align}
and that $N_{i+2}^2\leq (L_{i+1}-2(n-1))^2$. Therefore 
\begin{align}\label{immm}
Y\leq 4(d-i-1)^{2}.
\end{align}
Suppose that $t\geq a\,\D^{-1}$ for a fixed constant $a>0$. Then $$\nu\left(e^{tY}\right)\leq e^{4d^{2}t}\leq e^{4d^{2}t^2\D /a},$$
which implies \eqref{unidim} with $C = 4d^{2}/a$.

Next, assume that $t\leq b$ for some fixed constant $b>0$. From \eqref{secm} and Lemma \ref{le2} applied to the variable $X=(\bar N_{i+2})^2-\si^2$ in the system of size $L_{i+1}-2(n-1)$, we have that $$\var_\nu(Y)\leq C(L_{i+1}-2n)^{-2}.$$
From Lemma \ref{le1} applied to the system of size $L_{i+1}$, one has that 
$$
\D\geq c\,(L_{i+1}-2n)^{-1},
$$
for some positive constant $c$.
Combining these facts with the well known inequality $$\nu(e^h)\leq \exp{\big(\tfrac12\,\nu[h^2e^{|h|}]\big)},$$ 
which is valid for any function $h$ with $\nu(h)=0$  (use  $e^a\leq   1 + a + \tfrac12a^2e^{|a|}$ and $1+x\leq e^x$), we get
$$
\nu\left(e^{tY}\right)\leq \exp{(t^2C_1(L_{i+1}-2n)^{-1}e^{4d^2b})}\leq e^{C_2\D t^2}.
$$
Thus, we have shown that \eqref{unidim} holds for all $t\leq b$ and $t\geq a\D^{-1}$, and we have freedom on the choices of $a$ and $b$. 
In particular, we can  consider $a$ small and $b$ large if we wish.

Next, we observe that \eqref{unidim} holds for all $t\geq 0$ if $(L_{i+1}-2n)\leq \sqrt {L_{i+1}}$.
Indeed, from Lemma \ref{le1}, we know that $\D\geq c>0$ for some $c>0$ in this case. Therefore,
taking suitable constants $a,b$ (that is $a$ small and $b$ large enough) we cover all $t\geq 0$ with the previous argument. 

Thus, we are left with the case $ (L_{i+1}-2n)\geq \sqrt {L_{i+1}}$ for all $t\in[b,a\D^{-1}]$. Since by Lemma \ref{le1} one has $\D^{-1}\leq C(L_{i+1}-2n)$ 
for some constant $C>0$, we may actually restrict to $t\in[b,c(L_{i+1}-2n)]$ where $c$ can be made small if we wish.

\begin{lem}\label{le10}
There exists a constant $c>0$ such that for all $n$ satisfying $L_{i+1} -2 n \geq \sqrt {L_{i+1}}$, for all $t\leq  c(L_{i+1}-2n)$
we have 
\begin{equation}\label{abou1}
c\,\nu\left(e^{tY}\right)\leq 1.
\end{equation}
\end{lem}
\begin{proof}
We compute
\begin{equation}\label{co1}
\nu\left(e^{tY}\right) =\sum_k \nu(\bar N_{i+2}=k) \exp{\left(t\,\tfrac{k^2-\s^2}{\nu[N_{i+2}]^2}\right)}.
\end{equation}
Using $t\leq c(L_{i+1}-2n)$ and $\nu[N_{i+2}^2]\geq (L_{i+1}-2n)^{2}/4d^2$, see \eqref{secm}, we can bound 
$$ \exp{\left(t\,\tfrac{k^2-\s^2}{\nu[N_{i+2}]^2}\right)}\leq \exp{\left(\tfrac{4d^2c\,k^2}{L_{i+1}-2n}\right)}.
$$
From Proposition \ref{birthanddeathfin}   we know that
\begin{equation}\label{gauss}
\nu(\bar N_{i+2}=k) \leq \frac{C}{\sqrt{L_{i+1}-2n}}\exp{\left(-\tfrac{k^2}{C(L_{i+1}-2n)}\right)}
\end{equation}
for some constant $C>0$.  Thus, taking $c$ small enough, one has that \eqref{co1} is bounded by a constant. Adjusting the value of constants yields the desired conclusion \eqref{abou1}.
\end{proof}

An immediate consequence of Lemma \ref{le10} is that \eqref{unidim} holds for all $t\in [(L_{i+1}-2n)^{1/2},c(L_{i+1}-2n)]$. Indeed, it
suffices to observe that here 
$$\log\nu\left(e^{tY}\right) \leq C_1\leq C_1t^2/(L_{i+1}-2n)\leq Ct^2\D,$$
for some new constant $C>0$. 

Therefore, for the proof of \eqref{unidim} we are left with the regime $t\in [b,(L_{i+1}-2n)^{1/2}]$, and $ (L_{i+1}-2n)\geq \sqrt {L_{i+1}}$.
Here we use the following two facts.

 \begin{lem}\label{le11}
For any $\d>0$, there exists a constant $c_1>0$ such that for all $n$ satisfying
$L_{i+1} -2 n \geq \sqrt {L_{i+1}}$ and for all $t\leq c_1(L_{i+1}-2n)$ one has 
\begin{equation}\label{abbou1}
\sum_{|k|\geq (L_{i+1}-2n)^{1/2+\d}}\nu(\bar N_{i+2}=k) \exp{\left(t\,\tfrac{k^2-\s^2}{\nu[N_{i+2}]^2}\right)}
\leq \exp{\left(-c_1 (L_{i+1}-2n)^{\d}\right)}
\end{equation}
\end{lem}
\begin{proof}
This follows immediately from \eqref{secm} and \eqref{gauss}.
\end{proof}

\begin{lem}\label{le12}
For any $\d\in(0,\tfrac16)$, there exists a constant $C_1>0$ such that for all $n$ satisfying $L_{i+1} -2 n \geq \sqrt {L_{i+1}}$,
for all $0\leq t\leq (L_{i+1}-2n)^{1/2}$ one has 
\begin{equation}\label{abbou2}
\sum_{|k|\leq (L_{i+1}-2n)^{1/2+\d}}\nu(\bar N_{i+2}=k) \exp{\left(t\,\tfrac{k^2-\s^2}{\nu[N_{i+2}]^2}\right)}
\leq  \exp{\left(\tfrac{C_1t^2}{L_{i+1}-2n}\right)}.
\end{equation}
\end{lem}
\begin{proof}
Set $y_k =\tfrac{k^2-\s^2}{\nu[N_{i+2}]^2}$. We observe that if $|k|\leq (L_{i+1}-2n)^{1/2+\d}$, $\d\in(0,\tfrac16)$, and $t\leq (L_{i+1}-2n)^{1/2}$ 
then $|ty_k|\leq 1$. Then we can expand $$e^{ty_k} \leq  1 + ty_k + \tfrac12 t^2y_k^2 + C|t^3y_k^3|,$$
for some absolute constant $C>0$. Note that $\sum_{k }\nu(\bar N_{i+2}=k)y_k=\nu(Y)=0$. On the other hand, using Lemma \ref{le2}, we get
$$\sum_{k}\nu(\bar N_{i+2}=k)y^2_{k} =\var_\nu(Y)\leq C (L_{i+1}-2n)^{-2}.$$ 
Moreover, for $|k| \leq (L_{i+1}-2n)^{1/2+\d} $, we have the bound $|y_k|^3\leq (L_{i+1}-2n)^{6\d-3}$ and thus, for $t\leq (L_{i+1}-2n)^{1/2}$, one has
 $$|t^3y_k^3|\leq t^2 (L_{i+1}-2n)^{6\d-5/2}\leq  t^2 (L_{i+1}-2n)^{-3/2}.$$
This and Lemma \ref{le11} prove that the left hand side of \eqref{abbou2} is bounded above by $1 + Ct^2(L_{i+1}-2n)^{-1}$ for some new $C>0$. Using the bound $1+x\leq e^x$, this concludes the proof. 
\end{proof}

We can now finish the proof of \eqref{unidim}. Recall that it remained to check the estimate in the case $b\leq t\leq (L_{i+1}-2n)^{1/2}$, and $ (L_{i+1}-2n)\geq \sqrt {L_{i+1}}$. From the two lemmas above we have
\begin{align*}
\nu\left(e^{tY}\right)&\leq \exp{\left(\tfrac{C_1t^2}{L_{i+1}-2n}\right)}+\exp{\left({-c_1 (L_{i+1}-2n)^{\d}}\right)}
\\ & = \exp{\left(\tfrac{C_1t^2}{L_{i+1}-2n}\right)}\Big(1 + \exp{\left({-c_1 (L_{i+1}-2n)^{\d}} -\tfrac{C_1t^2}{L_{i+1}-2n} \right)}\Big).
\end{align*}
Taking logarithms, using the bounds $\log(1+x) \leq x$ and  $t\geq b$ one has
$$
\log\nu\left(e^{tY}\right)\leq\tfrac{C_1t^2}{L_{i+1}-2n} + \exp{\left({-c_1 (L_{i+1}-2n)^{\d}} -\tfrac{C_1b^2}{L_{i+1}-2n} \right)}
\leq \tfrac{2C_1t^2}{L_{i+1}-2n}.
$$
Hence \eqref{unidim} follows for all $t,n,L$. This ends the proof of Proposition \ref{laptr}.
 \end{proof}
We remark that Proposition \ref{laptr} is not necessarily sharp, since in
the regime where $L_{i+1}-2n$ is of order $L_{i+1}$, one has that $\D$ behaves as $L_{i+1}^{-1}$, 
and in analogy with Gaussian behavior one should expect $t^2/L_{i+1}^2=t^2\D^2$ rather than $t^2\D$ as in \eqref{laptr1}. However,
this bound is sufficient for our purposes and is weak enough to hold throughout the whole
range of values of $t,n,L$ that we are considering.

        \subsection{ Logarithmic Sobolev inequality for adjacent transpositions
        }\label{interch}
        In this subsection we prove Theorem \ref{AT_th}.  It will be convenient to prove a more general estimate related to the so called interchange process. The latter is defined as follows. 
        Given a graph $G=(V,E)$, with $|V|=n$ vertices, the interchange process on $G$ is the continuous-time Markov chain 
with state space  $S_n$ the symmetric group of permutations of $n$ objects, defined by the infinitesimal generator
\begin{equation}
\label{ipgen}
\cL_G f(\si)=\sum_{\{x,y\}\in E} \grad_{x,y}f(\si)\,,
\end{equation}
where $\si\in S_n$, $f: S_n\mapsto \bbR$, and 
$\grad_{x,y}f(\si)=f(\si^{x,y})-f(\si)$, if $
\si^{x,y}$ denotes the permutation $
\si'$ obtained from $\si$ after the $\{x,y\}$-transposition.
If we think of $\si$ as an assignment of $n$ labels to the vertices $V$, then we interpret the process as follows: independently, with rate one, each edge $e\in E$ swaps the labels at its end points.
The Dirichlet form is given by
\begin{equation}
\label{ipdir}
\cE_G(f,g)=-\u[f\cL_G g]=
\frac12\sum_{\{x,y\}\in E} \u[\grad_{x,y}f\grad_{x,y}g
]\,,
\end{equation}
where $\u$ denotes 
the uniform probability distribution on $S_n$ and $f,g: S_n\mapsto\bbR$.
The logarithmic Sobolev constant for the interchange process on $G$ is defined as 
\begin{equation}
\label{logsobip}
\a(G)=\inf_f\frac{\cE_G(\sqrt f,\sqrt f)}{\ent_\u(f)}
\end{equation}
where $f$
ranges over all $f:S_n\mapsto \bbR_+$, and 
$\ent_\u(f)=\u\left[f\log (f/\pi[f])\right].$
We will need the following result. 
\begin{theorem}\label{gln}
Let $\G_n$ be the $n$-segment, i.e.\ the  graph with $V=\{1,2,\dots,n\}$ and $E=\{\{i,i+1\}, i=1,\dots,n-1\}$. There exists $c>0$ such that for any $n\in\bbN$ :
 \begin{equation}
\label{gln1}
\a(\G_n)\geq \frac{c}{n^{2}}
\end{equation}
\end{theorem}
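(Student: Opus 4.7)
The plan is to prove $\alpha(\Gamma_n)\geq c/n^2$ by induction on $n$, using a conditioning strategy analogous to the recursion of Theorem \ref{rec_th}. The base case $n=2$ is a two-state chain with a universal log-Sobolev constant. For the inductive step, assume $\alpha(\Gamma_{n-1})\geq c/(n-1)^2$ and let $X=\sigma^{-1}(n)$ denote the position of the distinguished label $n$. Under $\pi$ on $S_n$, $X$ is uniform on $\{1,\dots,n\}$, and under the interchange dynamics $X$ performs a continuous-time simple random walk on $\Gamma_n$ at rate one per edge, because only swaps incident to position $X$ move label $n$, and they do so deterministically. Apply the standard entropy decomposition
\begin{equation*}
\ent_\pi(f)=\ent_\pi\bigl(\pi[f\mid X]\bigr)+\pi\bigl[\ent(f\mid X)\bigr].
\end{equation*}

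For the outer entropy, use the classical log-Sobolev inequality for simple random walk on the $n$-segment, whose constant is of order $1/n^2$. To pass from the projected Dirichlet form to the interchange Dirichlet form, note that the swap $T_{k,k+1}$ maps $\pi[\cdot\mid X=k]$ bijectively and measure-preservingly onto $\pi[\cdot\mid X=k+1]$, so the $L^2$ triangle inequality yields
\begin{equation*}
\bigl(\sqrt{\pi[f\mid X=k+1]}-\sqrt{\pi[f\mid X=k]}\bigr)^2\leq\pi\bigl[(\nabla_{k,k+1}\sqrt f)^2\mid X=k\bigr].
\end{equation*}
Summation in $k$ then gives $\ent_\pi(\pi[f\mid X])\leq C_1 n^2\,\cE_{\Gamma_n}(\sqrt f,\sqrt f)$.

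For the inner entropy, conditionally on $X=k$ the remaining $n-1$ labels form a uniform permutation on positions $\{1,\dots,n\}\setminus\{k\}$, which after re-indexing is isomorphic to $\Gamma_{n-1}$. The induction hypothesis bounds $\ent(f\mid X=k)$ by $c^{-1}(n-1)^2$ times the re-indexed Dirichlet form. All re-indexed edges coincide with $\Gamma_n$-edges, except for the single virtual edge joining $k-1$ and $k+1$ across the gap. This virtual swap is implemented by the three-swap composition $T_{k,k+1}\circ T_{k-1,k}\circ T_{k,k+1}$, which preserves $X=k$; Cauchy--Schwarz for the resulting telescoping sum together with $T$-invariance of $\pi$ bounds its contribution by three conditional Dirichlet terms at $X\in\{k,k+1\}$. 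Summing over $k$ with weight $1/n$ yields $\pi[\ent(f\mid X)]\leq C_2\,c^{-1}\,(n-1)^2\,\cE_{\Gamma_n}(\sqrt f,\sqrt f)$.

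The main obstacle will be closing the induction with matching constants: the combined bound $1/\alpha(\Gamma_n)\leq C_1 n^2+C_2(n-1)^2/c$ recovers $\alpha(\Gamma_n)\geq c/n^2$ only when $cC_1+C_2(1-1/n)^2\leq 1$. Since the crude three-swap comparison only yields $C_2$ of order one, this requires a sharpening. The plan is to either exploit that the triples of real swaps realizing the virtual edges for different values of $k$ are essentially disjoint, so that the Dirichlet contributions do not over-count, or avoid induction altogether by iterating the label-peeling in a single pass as in Theorem \ref{rec_th}, absorbing the constants into a controlled geometric series.
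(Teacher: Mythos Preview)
Your decomposition is sound up to the point you yourself flag: the recursion
\[
\alpha(\Gamma_n)^{-1}\leq C_1 n^2 + C_2\,\alpha(\Gamma_{n-1})^{-1}
\]
obtained by peeling one label at a time cannot close with any constant $C_2>1$, and your scheme necessarily produces $C_2>1$. Indeed, the ``real'' edges alone (those not incident to the frozen position $k$) already reconstruct, after averaging over $k$, essentially the full Dirichlet form $\cE_{\Gamma_n}$; the virtual edge across the gap then adds a strictly positive extra contribution. Neither proposed fix rescues this: the ``disjointness'' observation does not help because the issue is not over-counting but the fact that \emph{any} positive virtual-edge cost pushes $C_2$ above $1$; and unrolling the recursion ``in a single pass'' as in Theorem~\ref{rec_th} only works there because the number of iterations is the fixed dimension $d$, whereas here it would be $n$ and the constants would compound as $C_2^{\,n}$.

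The paper's proof avoids this by a different conditioning and a geometric rather than arithmetic recursion. Instead of fixing the position of one label, it fixes the \emph{set of labels occupying the left half} $\{1,\dots,n_1\}$ with $n_1=\lfloor n/2\rfloor$. Conditionally on this set the measure factorizes exactly as a product over $S_{n_1}\times S_{n-n_1}$, so tensorization yields the inner bound with constant $(\alpha(n_1)\wedge\alpha(n-n_1))^{-1}$ and \emph{no} virtual edge. The outer entropy is that of an exclusion configuration with $n_1$ particles; using the Lee--Yau log-Sobolev estimate for exclusion on $K_n$ together with the path comparison $\cE_{K_n}\leq Cn^3\cE_{\Gamma_n}$ gives an outer cost of order $n^2$. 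The resulting recursion
\[
\alpha(n)^{-1}\leq \alpha(\lfloor n/2\rfloor)^{-1}+Cn^2
\]
halves $n$ at each step, so the additive costs form a convergent geometric series and one obtains $\alpha(n)^{-1}=O(n^2)$. The essential idea you are missing is precisely this bisection: any recursion that reduces $n$ by a bounded amount per step will face the constant-matching obstruction you identified.
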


Before proving the theorem, let us show that it is indeed sufficient to establish Theorem \ref{AT_th}.
Given $2d$ non negative integer numbers $n_1,\dots,n_{2d}$ such that $\sum_{i=1}^{2d}n_i=n$, and a permutation $\pi\in S_n$, $\pi(1),\dots,\pi(n)$, let us define $\o(i)=j$ iff $\pi(i)\in[n_1+\dots n_{j-1}, n_1+\dots n_{j}]$. In words, if we think of $\pi(i)$ as a label at vertex $i\in V$, then we paint labels with $2d$ colors in such a way that the first $n_1$ labels have color $1$, the next $n_2$ labels have color $2$ and so on.  Thus for a fixed choice of $n_i$'s as above we have a map $S_n\mapsto\O(n_1,\dots,n_{2d})$, where $\O(n_1,\dots,n_{2d})$ is the set of $\o\in\{1,\dots,2d\}^n$ such that 
$\sum_{i=1}^n\ind_{\o(i)=j} = n_j$, for all $j=1,\dots,2d$. 

Projecting the interchange process along the map described above gives the continuous time Markov chain on $\O(n_1,\dots,n_{2d})$ with infinitesimal generator
\begin{equation}
\label{blgen}
\cG_G h(\o)=\sum_{\{x,y\}\in E} \grad_{x,y}h(\o)\,,
\end{equation}
where $h: \O(n_1,\dots,n_{2d})\mapsto \bbR$, and $ \grad_{x,y}h(\o)=h(\o^{x,y})-h(\si)$, where $\o^{x,y}$ denotes the element $\o'\in\O(n_1,\dots,n_{2d})$ such that $\o(i)=\o'(i)$ for all $i\neq x,y$ and $(\o'(x),\o'(y))=(\o(y),\o(x))$. The associated Dirichlet form is given by
\begin{equation}
\label{excldir}
\cD_G(\varphi,\psi)=-\hat\u[\varphi\,\cG_{G} \psi]=
\frac12\sum_{\{x,y\}\in E} \hat\u[\grad_{x,y}\varphi\grad_{x,y}\psi]\,,
\end{equation}
where $\varphi,\psi:\O(n_1,\dots,n_{2d})\mapsto\bbR$, and $\hat\u$ denotes the uniform distribution on $\O(n_1,\dots,n_{2d})$.
In the special case $d=1$, this is the exclusion process on the graph $G$ with $n_1$ particles and $n_2=n-n_1$ empty sites. 
In general, we define the  logarithmic Sobolev constant for the above process as 
\begin{equation}
\label{logsobexcl}
\a(G,n_1,\dots,n_{2d})=\inf_h\frac{\cD_{G}(\sqrt h,\sqrt h)}{\ent_{\hat\u}(h)},
\end{equation}
where 
the infimum ranges over $h:\O(n_1,\dots,n_{2d})\mapsto \bbR_+$
Note that by contraction, one has \begin{equation}
\label{contrax}
\a(G)\leq \a(G,n_1,\dots,n_{2d}).
\end{equation} 
We remark that for any graph $G$, the above described projection is known to leave the spectral gap of the process invariant \cite{CaLiR}. In contrast, the log-Sobolev constant can be changed by the projection, that is the inequality \eqref{contrax} can be strict. For instance, if $G$ is the complete graph $K_n$, then
Lee and Yau \cite{LeY} (see also \cite{DiSa} for some earlier results)
prove that 
 \begin{equation}
\label{LY1}
\g(K_n)\asymp\frac{n}{\log n}\,
\end{equation}
where the symbol $a_n\asymp b_n$ means that $c
\leq (b_n/a_n)\leq c^{-1}$, 
for an absolute constant $c>0$. Moreover, they also prove that for $d=1$, any $n_1\in\{1,\dots,n-1\}$, 
 \begin{equation}
\label{LY2}
\a(K_n,n_1,n-n_1)\asymp\frac{n}{-\log (\tfrac{n_1}n(1-\tfrac{n_1}n))}\,.
\end{equation}
In particular, if $n_1\asymp n/2$ one has $\a(K_n,n_1,n-n_1)\asymp n$.  

Consider now the special case $n=L\in2\bbN$, and $n_i=n_{i+d}$, for all $i=1,\dots,d$, $\sum_{i=1}^dn_i=L/2$. Then $\O(n_1,\dots,n_{2d})$ coincides with our set $\O_L$ of $\bbZ^d$ paths of length $L$
started at the origin which come back to the origin after $L$ steps.  Moreover, the measure $\mu^{(d)}$ coincides with $\hat \pi$ and 
$\ent_d(f) = \ent_{\hat\pi}(f)$ for all $f\geq 0$. Therefore, using \eqref{contrax} and Theorem \ref{gln} one finds
 \begin{equation}
\label{entd}
\ent_d(f)\leq CL^2\sum_{x=1}^{L-1}\mu^{(d)}\left[(\nabla_{x,x+1}\sqrt f)^2\right].
\end{equation}
Taking expectation with respect to the measure $\mu$ one obtains the estimate \eqref{recur20}.
Thus, we have checked that Theorem \ref{gln} implies Theorem \ref{AT_th}.

\subsection{Proof of Theorem \ref{gln}}
For the graph $\G_n$ it is well known, see \cite{Yau_gen}, that uniformly in $n_1\in\{1,\dots,n-1\}$:
 \begin{equation}
\label{N2}
\a(\G_n,n_1,n-n_1)\asymp n^{-2}\,.
\end{equation}
That is, the simple exclusion process on the $n$-segment  has log-Sobolev constant scaling like $n^{-2}$ independently of the number of particles $n_1$. In particular, by \eqref{contrax}, $\a(\G_n)\leq Cn^{-2}$.
We want to establish the lower bound $\a(\G_n)\geq C^{-1}n^{-2}$. We could not find an explicit proof of this statement in the literature.  We shall derive this estimate as a consequence of \eqref{LY1} and \eqref{LY2} together with a suitable recursive argument that might be of interest on its own. 
We remark that a simple comparison argument between Dirichlet forms, see e.g.\ \cite{DiSa}, gives the estimates 
\begin{equation}\label{compa0}
\cE_{K_n}(f,f)\leq C\,n^3\cE_{\G_n}(f,f)\,,
\end{equation}
for some constant $C>0$ and all functions $f$. Thus, \eqref{LY1} implies $\a(\G_n)\geq c\,(n^{2}\log n)^{-1}$, which is not sufficient for our purpose.

We use the short hand notation $\a(n)$ for $\a(\G_n)$, $\cE_n$ for $\cE_{\G_n}$. 
Fix $n_1\in \{1,\dots,n-1\}$, $\si\in S_n$ and let $X=X(n_1,\si)$ denote the
vector in $\{0,1\}^n$ such that $X_i=1$ iff label $i$ occupies one of the first $n_1$ vertices of $V=\{1,\dots,n\}$.  
Clearly, as $\si$ spans $S_n$, $X$ spans $\O(n_1,n-n_1)$. 
Let $\u_X=\u[\cdot\tc X]$ denote the distribution $\u$ conditioned on the value of $X$. The entropy of a function $f:S_n\mapsto\bbR_+$ can be decomposed as
 \begin{equation}
\label{gln2}
\ent_\u(f)=\u[\ent_{\u_X}(f)] + \ent_\u(\u[f\tc X]).
\end{equation}
Note that $\u_X$ is a
product measure over the product space $S_{n_1}\times S_{n-n_1}$. Thus,
by the well known tensorization property of entropy, see e.g.\ \cite{DiSa}, one has
 \begin{equation}
\label{gln3}
\ent_{\u_X}(f)\leq (\a(n_1)\wedge\a(n-n_1))^{-1} \Big\{\tfrac12
\sum_{i=1}^{n_1-1}\u_X[(\grad_{i,i+1}	\sqrt f)^2]
+\tfrac12 \sum_{i=n_1 +1}^{n -1}\u_X[(\grad_{i,i+1}\sqrt f)^2]
\Big\}.
\end{equation}
Taking expectation w.r.t.\ $\u$ in \eqref{gln3}, and using $\u[\u_X(g)]=\u[g]$ for any $g$, one has that 
 \begin{equation}
\label{gln4}
\u[\ent_{\u_X}(f)]
\leq (\a(n_1)\wedge\a(n-n_1))^{-1}\cE_{n}(\sqrt f,		\sqrt f)
\end{equation}
To estimate the second term in \eqref{gln2}, note that the marginal of $\u$ on $X$ coincides with the 
 uniform distribution on $\O(n_1,n-n_1)$, which we will denote by $\hat \pi$.
From \eqref{LY2}, setting $\varphi(X)=\u[f\tc X]$ we can estimate
 \begin{equation}
\label{gln5}
\ent_\u(\u[f\tc X]) = \ent_{\hat\pi}(\varphi)
\leq C\left(\frac{n}{-\log (\tfrac{n_1}n(1-\tfrac{n_1}n))}
\right)^{-1} \sum_{1\leq i<j\leq n} \hat\pi[(\grad_{i,j}\sqrt\varphi)^2].
\end{equation}
Observe that if $ X^{i,j}$ denotes the configuration $X$ after the swap of $\{i,j\}$, one has
$$\u[f\tc X^{i,j}]=\u[f^{i,j}\tc X].$$ 
Thus,
$$
\grad_{i,j}\sqrt\varphi
= \sqrt {\u[f^{i,j}\tc X]}- \sqrt{ \u[f\tc X]}.
$$
 Convexity of $(0,
 \infty)^2\ni(a,b)\mapsto (\sqrt a - \sqrt b)^2$ implies that
 $$
 (\grad_{i,j}\sqrt\varphi)^2\leq  \u\left[(\sqrt{f^{i,j}} - \sqrt f)^2
 \tc X\right].
 $$
 Therefore, using $\hat\pi[\pi(\cdot\tc X)] = \pi$:
 $$
 \sum_{1\leq i<j\leq n}  \hat\pi[(\grad_{i,j}\sqrt\varphi)^2]\leq  \sum_{1\leq i<j\leq n} \u\left[(\sqrt{f^{i,j}} - \sqrt f)^2
\right] = \cE_{K_n}(\sqrt f,\sqrt f).
 $$
Using \eqref{compa0} one can estimate \eqref{gln5} by
 \begin{equation}
\label{gln6}
\ent_\u(\u[f\tc X]) 
\leq C\,n^2\left(-\log (\tfrac{n_1}n(1-\tfrac{n_1}n))
\right) \cE_n(\sqrt f,\sqrt f).
\end{equation}
Then, from \eqref{gln2} and \eqref{gln4}, one has
 \begin{equation*}
\a(n)^{-1}\leq 
(\a(n_1)\wedge\a(n-n_1))^{-1} + C\,n^2\left(-\log (\tfrac{n_1}n(1-\tfrac{n_1}n))
\right).
\end{equation*}
Up to now $n_1$ was arbitrary. We may take $n_1=\lfloor n/2\rfloor$
to obtain
\begin{equation}
\label{gln7}
\a(n)^{-1}\leq 
(\a(\lfloor n/2\rfloor)\wedge\a(n-\lfloor n/2\rfloor))^{-1} + C\,n^2,
\end{equation}
for a new constant $C>0$.
Iterating  \eqref{gln7} one arrives easily at $\a(n)^{-1}=O(n^2)$. 
\hfill $\qed$

       \section{Proof of the lower bound. }\label{lower}
         Let $h(\eta) \in \bbZ$ be the projection of a vector $\eta\in\O_L$ on
          its first coordinate. Then $h(\eta)\in \O^1_L$, where 
          $$\gO^{1}_{L} := \{ \phi\in \bbZ^{L+1}:\;\phi_{0}= \phi_{L} = 0, \;\phi_{x}-\phi_{x-1}
          \in \{-1,0,1\}  \}.$$  
          Notice that $h(\eta_t)$ is not a  Markov chain under the evolution $\eta_t$ defined by \eqref{dyn}. However, the following lemma allows us to describe the evolution of a linear function of the field $h(\eta)$.       Let $\Delta$ be the discrete Laplace operator
               \begin{equation*}
                (\Delta \phi)_{x} = \frac{1}{2}(\phi_{x-1}+ \phi_{x+1}) - \phi_{x},
               \end{equation*} 
             and define  $g_{x} := \sin\left( \frac{\pi x}{L}\right)$. For any $x=\{1,\ldots,L-1 \}$, 
              \begin{equation}\label{eigen}
               (\Delta g)_{x} = - \kappa_{L} g_{x},  
              \end{equation} 
              where $\kappa_L$ is the principal Dirichlet eigenvalue of $\D$ given by $\kappa_L= 1 - \cos(\frac{\pi}{L})$. Notice that $\k_L\sim \tfrac{\pi^2}{2L^2}$. For any $\eta\in \gO_{L}$, we define the function $$\Phi(\eta) = \sum_{x=1}^{L-1} g_{x} h_{x}(\eta),$$
                where we use the notation $h_x$ for the map $\eta\mapsto h(\eta)_x = \eta_x\cdot e_1$. 
          \begin{lem}\label{wil1}
          Let $\cL$ be the generator \eqref{dyn}. 
          Then for all $\eta\in\O_L$: \begin{equation}
               \cL \Phi(\eta) =  - \kappa_{L} \Phi(\eta).  
              \end{equation}       
                  \end{lem}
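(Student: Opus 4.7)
The plan is to show that $\cL h_x(\eta) = (\Delta h(\eta))_x$ for each $x\in\{1,\dots,L-1\}$, where we use the Dirichlet boundary convention $h_0(\eta)=h_L(\eta)=0$ coming from $\eta_0=\eta_L=o$. Once this identity is established, linearity of $\cL$ gives
\begin{equation*}
\cL\Phi(\eta)=\sum_{x=1}^{L-1}g_x\,(\Delta h(\eta))_x,
\end{equation*}
and a discrete summation by parts, together with $g_0=g_L=0$, transfers the Laplacian onto $g$, yielding $\cL\Phi(\eta)=\sum_{x=1}^{L-1}h_x(\eta)(\Delta g)_x=-\kappa_L\Phi(\eta)$ by \eqref{eigen}.

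To prove $\cL h_x=(\Delta h)_x$, I will first note that for $y\neq x$ the operator $Q_y$ only affects $\eta_y$, and $h_x$ depends solely on $\eta_x$, so $Q_yh_x=h_x$. Hence only the term $y=x$ contributes, and it suffices to show
\begin{equation*}
Q_xh_x(\eta)=\tfrac{1}{2}\bigl(h_{x-1}(\eta)+h_{x+1}(\eta)\bigr).
\end{equation*}
Recall that $Q_xh_x(\eta)$ is the average of $\eta'\cdot e_1$ over the set of admissible values $\eta'$ of $\eta_x$ given the neighbors $\eta_{x-1}$ and $\eta_{x+1}$. The admissible values are exactly the lattice points at distance one from both $\eta_{x-1}$ and $\eta_{x+1}$.

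Split into the two cases from the description of the dynamics. If $\eta_{x-1}\neq\eta_{x+1}$, then there are exactly two admissible values for $\eta_x$, namely the current $\eta_x$ and its mirror image $\eta_{x-1}+\eta_{x+1}-\eta_x$ (obtained by swapping the two increments $\zeta_x,\zeta_{x+1}$); their average is $\tfrac12(\eta_{x-1}+\eta_{x+1})$. If instead $\eta_{x-1}=\eta_{x+1}$, then $\eta_x$ ranges uniformly over the $2d$ neighbors $\eta_{x-1}+e_j$ for $j=1,\dots,2d$; since $\sum_{j=1}^{2d}e_j=0$, the average equals $\eta_{x-1}=\tfrac12(\eta_{x-1}+\eta_{x+1})$. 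In both cases, projecting onto the first coordinate yields the stated identity for $Q_xh_x$.

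Combining these ingredients, $\cL h_x(\eta)=Q_xh_x(\eta)-h_x(\eta)=\tfrac{1}{2}(h_{x-1}(\eta)+h_{x+1}(\eta))-h_x(\eta)=(\Delta h(\eta))_x$, valid for every $x\in\{1,\dots,L-1\}$ with the boundary values $h_0=h_L=0$. Summation by parts then gives the claim. No step is a real obstacle: the only point worth care is the case analysis defining $Q_x$, where one must verify that in both the ``swap'' and the ``creation/annihilation'' regimes the conditional mean of $\eta_x$ equals the midpoint $\tfrac12(\eta_{x-1}+\eta_{x+1})$, which is precisely what makes $h$ a discrete harmonic coordinate and allows the one-dimensional Wilson-type argument to go through in arbitrary dimension.
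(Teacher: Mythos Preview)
Your proof is correct and follows essentially the same approach as the paper: establish $\cL h_x=(\Delta h)_x$ via the conditional-mean computation, then use linearity and summation by parts with \eqref{eigen}. The paper states the key identity $\cL\eta_y=(\Delta\eta)_y$ coordinatewise without spelling out the two-case analysis of $Q_x$, so your argument is simply a more detailed version of the same proof.
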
 
        \begin{proof}
        Observe that for any given $y\in\{1,\dots,L-1\}$ one has that the $d$-dimensional vector $\eta_y$ satisfies, coordinatewise
        $$
        \cL \eta_y = \tfrac12(\eta_{y-1}+\eta_{y+1}) - \eta_y = (\D \eta)_y.
        $$
        By projecting along the first coordinate, the same expression holds for the function $h_y:\eta\mapsto h(\eta)_y$,
        \begin{equation}\label{deltaphi}
        \cL h_y(\eta) = \tfrac12(h_{y-1}(\eta)+h_{y+1}(\eta)) - h_y(\eta) = (\D h)_y(\eta).
        \end{equation}
Using linearity, summation by parts, and \eqref{eigen} conlcudes the proof.
        \end{proof}
        Let $P_t = e^{t\cL}$ denote the semigroup of the process, so that for any $\eta_0\in\O_L$, any function $f:\O_L\mapsto\bbR$, one has that the configuration $\eta_t$ at time $t$ with initial state  $\eta_0$ satisfies 
        $\bbE f(\eta_t) = P_t f (\eta_0)$. 
         It follows from Lemma \ref{wil1} that for all $t\geq 0$
         \begin{equation}\label{eigen2}
         \bbE[\Phi(\eta_t)] = P_t\Phi(\eta_0)= e^{-\kappa_Lt}\Phi(\eta_0).
         \end{equation}
         As $t\to\infty$ one has $ \bbE[\Phi(\eta_t)] \to\mu[\Phi]=0$. Consider now the evolution with initial state at the configuration $\eta_0=\eta^*$ defined by $(\eta^*)_x=x e_1$, for $x\in\{0,\dots,L/2\}$ and $(\eta^*)_x=(L-x) e_1$, for $x\in\{L/2 +1,\dots,L\}$, that is the maximal configuration for the first coordinate. 
        Then the initial value $\Phi(\eta^*)$ is of size $L^2$ and therefore for time $t\leq c L^2\log L$, for a suitable constant $c>0$, \eqref{eigen2} tells us that $  \bbE[\Phi(\eta_t)]$ is much larger than its equilibrium value $0$. 
       We can use this fact to lower bound the total variation distance from equilibrium. However, this only allows us to prove that the mixing time is at least $c L^2$ because the $L_\infty$ norm of $\Phi$ is also of size $L^2$. Indeed, observing that $|\Phi|_\infty = \Phi(\eta^*)$, one has
        $$
        \|p_{t}(\eta^*,\cdot)- \mu\|_{\rm TV}\geq \tfrac12(|\Phi|_\infty )^{-1}\,P_{t}\Phi(\eta^*) =\tfrac12e^{-\kappa_Lt}\geq \tfrac12\,e^{- Ct/L^2},
        $$
        for some constant $C>0$, where we use 
        $\|\nu-\nu'\|_{\rm TV} = \tfrac12\sup_{f:\;|f|_\infty\leq 1}(\nu(f)-\nu'(f))$.
        
        To obtain the extra logarithmic factor in the lower bound we follow Wilson's approach in \cite{Wil} and compute the variance of the random variable $\Phi^*_t:=\Phi(\eta_t)$ when the initial state is $\eta_0=\eta^*$. 
          \begin{lem}\label{wil2}
          The random variable $\Phi^*_t$ satisfies
\begin{equation}\label{varbo}
               \var(\Phi^*_t)\leq C_0 L^3\,,  
              \end{equation}       
              for some constant $C_0>0$, for all $t\geq 0$. 
                  \end{lem}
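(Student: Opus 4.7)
The plan is to derive a closed differential inequality for $v(t):=\var(\Phi^*_t)$ by combining Lemma \ref{wil1} with the standard carré du champ identity $\cL(\Phi^2) = 2\Phi\,\cL\Phi + \Gamma(\Phi)$, where for the jump generator \eqref{desdyn} the carré du champ takes the explicit form
\begin{equation*}
\Gamma(\Phi)(\eta) = \sum_{x=1}^{L-1} c_x(\eta)\bigl(\nabla_x\Phi(\eta)\bigr)^2 + \sum_{x=1}^{L-1}\sum_{j=1}^{2d} c^{\star,j}_x(\eta)\bigl(\nabla_x^{*,j}\Phi(\eta)\bigr)^2.
\end{equation*}
Applying Dynkin's formula to $\bbE[\Phi^*_t]$ and to $\bbE[(\Phi^*_t)^2]$, and invoking $\cL\Phi=-\kappa_L\Phi$ from Lemma \ref{wil1}, produces the linear ODE
\begin{equation*}
\tfrac{d}{dt}v(t) = -2\kappa_L\, v(t) + \bbE\bigl[\Gamma(\Phi)(\eta^*_t)\bigr].
\end{equation*}

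The second step is a deterministic bound $\Gamma(\Phi)(\eta)\leq C L$, uniform in $\eta$. Both the swap moves $\eta\mapsto\eta^x$ and the creation/annihilation moves $\eta\mapsto\eta^{x,\star,j}$ affect the path $\eta$ only at the single coordinate $\eta_x$; in either case $|h_x(\eta')-h_x(\eta)|\leq 2$ because every increment $\zeta_y$ satisfies $|\zeta_y\cdot e_1|\leq 1$. Hence $(\nabla_x\Phi)^2\leq 4 g_x^2$ and $(\nabla_x^{*,j}\Phi)^2\leq 4 g_x^2$ pointwise. Combined with the pointwise bound $c_x(\eta)+\sum_{j=1}^{2d}c^{\star,j}_x(\eta)\leq 1$ and the trigonometric identity $\sum_{x=1}^{L-1} g_x^2 = \sum_{x=1}^{L-1}\sin^2(\pi x/L) \leq L/2$, this yields $\Gamma(\Phi)(\eta)\leq 2L$ for every $\eta\in\O_L$.

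Since the initial condition $\eta_0=\eta^*$ is deterministic, $v(0)=0$, and Gronwall integration of the ODE gives
\begin{equation*}
v(t) \leq \int_0^t e^{-2\kappa_L(t-s)}\cdot 2L\,ds \leq \frac{L}{\kappa_L}.
\end{equation*}
The asymptotics $\kappa_L = 1-\cos(\pi/L)\geq c/L^2$ then produces $v(t)\leq C_0 L^3$, uniformly in $t\geq 0$, which is the claim. There is no substantial obstacle; the only point requiring some care is the uniform upper bound on the carré du champ, where one must handle the swap and the creation/annihilation contributions separately and check that in both cases the jump of the linear functional $\Phi$ is controlled by a constant multiple of $|g_x|$.
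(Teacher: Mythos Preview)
Your argument is correct and follows essentially the same strategy as the paper: both establish the pointwise inequality $\cL\Phi^2 \leq -2\kappa_L\Phi^2 + CL$ and then integrate the resulting linear differential inequality, using $\kappa_L\asymp L^{-2}$ to conclude. The only difference is packaging: the paper computes $\cL(h_xh_y)$ separately for $x=y$ and $x\neq y$ and then simplifies algebraically, whereas you invoke the carr\'e du champ identity $\cL(\Phi^2)=2\Phi\,\cL\Phi+\Gamma(\Phi)$ together with Lemma~\ref{wil1} to reach the same inequality more directly, and you track $v(t)=\var(\Phi^*_t)$ from the outset rather than bounding $P_t\Phi^2$ and subtracting $(P_t\Phi)^2$ at the end.
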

                  
Before proving the lemma,  let us conclude the proof of the lower bound in Theorem \ref{mainmix}.  
From Lemma \ref{wil2} and Chebyshev's inequality we have
\begin{equation}\label{chebs}
            \bbP\left[| \Phi^*_t - \bbE[\Phi^*_t]| \geq \sqrt{L^3/\e}\right] \leq C_0\,\e.
           \end{equation}   
           Let $E=\{\eta\in\O_L:\;\Phi(\eta)\geq \sqrt{L^3/\e}\}$, for some $\e>0$ to be fixed below. 
           In the limit $t\to\infty$, using $\bbE[\Phi^*_t]\to \mu[\Phi] = 0$, \eqref{chebs} yields the estimate
          \begin{equation}\label{chebs1}
            \mu(E)=\bbP\left[\Phi^*_\infty \geq \sqrt{L^3/\e}\right]\leq C_0\,\e.
           \end{equation} 
           Moreover, if $T$ is such that $\bbE[\Phi^*_T]\geq 2\sqrt{L^3/\e}$, then \eqref{chebs} implies 
           \begin{equation}\label{chebs2}
            p_T(\eta^*,E)=\bbP\left[\Phi^*_T \geq \sqrt{L^3/\e}\right] \geq 1- C_0\,\e.
           \end{equation} 
           Thus, fixing $\e=1/(4C_0)$, one has 
           $$
           \|p_{T}(\eta^*,\cdot)- \mu\|_{\rm TV}\geq p_T(\eta^*,E)-\mu(E) \geq \tfrac12,
           $$
           so that $\tmix\geq T$. On the other hand, from \eqref{eigen2},
           $$
\bbE[\Phi^*_T]=e^{-\kappa_L T}\Phi(\eta^*)\geq c_1L^2 e^{-T/c_1 L^2},
           $$ 
           for some constant $c_1>0$, so that $T=c_2 L^2\log L$ for some new constant $c_2$ suffices.
           This concludes the proof of the lower bound in Theorem \ref{mainmix}.  It remains to prove Lemma  \ref{wil2}.
           
           \subsection{Proof of Lemma \ref{wil2}}
           We start by proving that for any $\eta\in\O_L$: 
            \begin{equation}\label{genphi2}
\cL\Phi^2(\eta) \leq -2\kappa_L\Phi^2(\eta) + C\,L.
           \end{equation} 
           Notice that by \eqref{deltaphi} if $x\neq y$, then 
            \begin{equation}\label{genphi3}
           \cL h_x h_y = h_x(\Delta h)_y + h_y(\Delta h)_x.
           \end{equation}
           On the other hand a simple computation shows that
            \begin{equation}\label{genphi4}
           \cL h_x^2 = \ind_{|h_{x-1}-h_{x+1}|\neq 2}(\D h^2)_x + \tfrac1d\,\ind_{\eta_{x-1}=\eta_{x+1}}
           \end{equation}
           Let $\e_x:=\ind_{|h_{x-1}-h_{x+1}|\neq 2}$ and $\d_x:=\tfrac1d\ind_{\eta_{x-1}=\eta_{x+1}}$. 
           Then, \eqref{genphi3} and  \eqref{genphi4} yield
            \begin{equation*}
                 \cL\Phi ^2 = \sum_{x=1}^{L-1} g^{2}_{x} \left(\e_x(\Delta h^{2})_{x}  
                 + \d_x\right) +2 \sum_{x \neq y} g_{x} g_{y} h_y(\Delta h)_x      .
              \end{equation*} 
Now we observe that
\begin{equation*}
    \begin{aligned}
       \sum_{x\neq y} g_{x} g_{y}h_y(\Delta h)_x & =   \sum_{x} g_{x}
      (\Delta h)_{x} \sum_{y:\;y\neq x} g_{y} h_{y} \\
       & =  \sum_{x} g_{x}
      (\Delta h)_{x} (\Phi - g_{x} h_{x}) \\
       & =  \Phi \sum_{x} g_{x}
      (\Delta h)_{x} -  \sum_{x} g^{2}_{x} h_{x} (\Delta h)_{x}. 
    \end{aligned}
    \end{equation*} 
    Summing by parts, from \eqref{eigen} 
     we infer that:
      \begin{equation}\label{aboe}
      \cL\Phi ^2 = -2 \kappa_{L} \Phi^{2} + \sum_{x=1}^{L-1} g_{x}^{2} \left(\e_x(\Delta h^{2})_{x} + \d_x-2 h_{x} (\Delta h)_{x}   \right). 
      \end{equation} 
      Notice that if $|h_{x-1}-h_{x+1}|=2$, then necessarily $(\D h)_x=0$. Therefore we may replace  $h_{x} (\Delta h)_{x}$ by $ \e_xh_{x} (\Delta h)_{x}$ in \eqref{aboe}. 
      For any $x$:
       \begin{equation*}
        \begin{aligned}
          (\Delta h^{2})_{x} -2 h_{x} (\Delta h)_{x} & = \tfrac{1}{2}(h_{x+1}^{2} 
          + h_{x-1}^{2}) -h_{x}^{2}  -2 h_{x} \left(\tfrac{1}{2}(h_{x+1} + h_{x-1}) -h_{x}\right) \\
          & = \tfrac{1}{2} \left(h_{x+1}^{2} 
          + h_{x-1}^{2} \right) - h_{x}h_{x+1} - h_{x}h_{x-1} + h_{x}^{2} \\
           & = \tfrac{1}{2} \left[ (\grad h)^2_{x} + (\grad h)_{x+1}^{2}\right].
        \end{aligned}
       \end{equation*} 
       Summarizing
       $$
       \cL\Phi ^2 = -2 \kappa_{L} \Phi^{2} + \sum_{x=1}^{L-1} g_{x}^{2} \left(\tfrac12\,\e_x\left[ (\grad h)^2_{x} + (\grad h)_{x+1}^{2}\right] + \d_x  \right). 
       $$
Using $|\grad h|\leq 1$ and $\d_x\leq 1$ one has the desired bound \eqref{genphi2}.
           
          Next, using $\frac{d}{dt}P_{t} = P_t\cL$ and \eqref{genphi2} one has 
            \begin{equation*}
            \frac{d}{dt}\left[e^{2 \kappa_{L}t} P_{t}\Phi^{2}\right] \leq C\, L \,e^{2 \kappa_{L} t}.
          \end{equation*}  
         Therefore
            \begin{equation*}
            e^{2 \kappa_{L}t}P_{t}\Phi^{2} \leq  C\,\frac{L}{ 2\kappa_{L}} \left(e^{2 \kappa_{L} t} -1 \right) + \Phi^{2}.
            \end{equation*} 
    Recalling \eqref{eigen2}, we then obtain:
           \begin{equation*}
            \var( \Phi^*_{t}) = P_t \Phi^2 (\eta^*) -  (P_t \Phi (\eta^*))^2\leq C\,\frac{L}{2 \kappa_{L}}\left(1 - e^{-2 \kappa_{L} t} \right) \leq C\frac{L}{2 \kappa_{L}}.
           \end{equation*} 
  Thus, \eqref{varbo} holds for a suitable constant $C_0$.
  \hfill $\qed$

            \appendix
            
            \section{On the distribution of the number of particles}
            In this section we show that for a system of size $L$ the number of particles of a given type  behaves roughly like a gaussian variable with mean $L/2d$ and variance proportional to $L$.  
            Recall the definition of the condition {\rm Conv}$(c,\bar n$) from Section \ref{secMi}. Fix $L\in 2\bbN$ and the dimension $d$. 
            Define 
            \begin{equation}\label{lawn1}
     \g(n):= \mu(N_{1} = n)\,. 
   \end{equation}

      \begin{prop}\label{birthanddeathfin}   
  The measure $\g$ defined in \eqref{lawn1} satisfies {\rm Conv}$(c,\bar n$) for some constant $c>0$ independent of $L$, with $\bar n:=L/2d$. 
      \end{prop}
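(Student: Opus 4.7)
The plan is to reduce Proposition \ref{birthanddeathfin} to a sharp local central limit theorem for a squared-multinomial distribution. Starting from \eqref{lawincre}, summing over $(n_{2},\dots,n_{d})$ with $\sum_{j\geq 2}n_{j}=L/2-n$ and factoring $\binom{L/2}{n,n_{2},\dots,n_{d}}=\binom{L/2}{n}\binom{L/2-n}{n_{2},\dots,n_{d}}$, the marginal splits as
\begin{equation*}
\g(n)=\frac{1}{Z_{d}(L/2)}\binom{L/2}{n}^{2}Z_{d-1}(L/2-n),\qquad Z_{k}(m):=\sum_{m_{1}+\dots+m_{k}=m}\binom{m}{m_{1},\dots,m_{k}}^{2}.
\end{equation*}
This representation isolates the $n$-dependence and makes the problem accessible to Stirling-type asymptotics, as soon as one controls the normalizers $Z_{k}(m)$.

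The central input I would prove is the uniform asymptotic
\begin{equation*}
Z_{k}(m)\;=\;C_{k}\,k^{2m}\,m^{-(k-1)/2}\,\bigl(1+O(1/m)\bigr)\qquad(m\geq 1),
\end{equation*}
with an explicit constant $C_{k}>0$ depending only on $k$. The cleanest route is via the identity $Z_{k}(m)=(2k)^{2m}\binom{2m}{m}^{-1}p^{(k)}_{2m}(o)$, where $p^{(k)}_{2m}(o)$ denotes the $2m$-step return probability of simple symmetric random walk on $\bbZ^{k}$, combined with the classical local CLT for that walk, whose error is uniform in $m$. Inserting the estimate for $Z_{d-1}(L/2-n)$ and applying Stirling to $\binom{L/2}{n}^{2}$, one obtains, uniformly in $n\in\{1,\dots,L/2-1\}$,
\begin{equation*}
\g(n)\;\asymp\;\frac{1}{\sqrt{\bar n}}\,\exp\!\Big(-\tfrac{(n-\bar n)^{2}}{c\bar n}\Big),\qquad\bar n=L/2d,
\end{equation*}
which is precisely the two-sided Gaussian bound \eqref{conc}. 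The affine range condition $c^{-1}\bar n\leq n_{\max}-\bar n,\ \bar n-n_{\min}\leq c\bar n$ is immediate with $c=d$ since $n_{\min}=0$, $n_{\max}=L/2$ and $\bar n=L/2d$.

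For the monotone ratio estimates \eqref{conv1}--\eqref{conv2}, I would directly compute from the factorization above
\begin{equation*}
\frac{\g(n+1)}{\g(n)}\;=\;\left(\frac{L/2-n}{(n+1)(d-1)}\right)^{\!2}\!\bigl(1+O\bigl(\tfrac{1}{L/2-n}\bigr)\bigr),
\end{equation*}
and then, writing $n=\bar n+k$ with $k\geq 0$ and Taylor-expanding in $k/\bar n$, deduce $\log(\g(n+1)/\g(n))\leq -k/(c\bar n)$ uniformly for $\bar n\leq n\leq L/2-1$, which yields \eqref{conv1}; the case $n\leq\bar n$ giving \eqref{conv2} is handled symmetrically by interchanging the roles of $n$ and $L/2-n$, and near the boundaries $n\to 0,\,L/2$ the factor $((L/2-n)/(n+1))^{2}$ already provides super-exponential decay stronger than required. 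The main obstacle I anticipate is the uniform-in-$m$ error control in the local CLT for $p^{(k)}_{2m}(o)$ down to small $m$: the Fourier-analytic computation is standard in the fixed dimension $k\leq d$, but must be executed carefully to ensure the correction terms do not swamp the leading Gaussian shape. An alternative, perhaps more self-contained, route is induction on $k$ using the recursion $Z_{k}(m)=\sum_{j=0}^{m}\binom{m}{j}^{2}Z_{k-1}(m-j)$ together with a saddle-point evaluation around $j=m/k$, which mixes the inductive Gaussian bound with Stirling for $\binom{m}{j}^{2}$.
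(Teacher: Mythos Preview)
Your proposal is correct and follows essentially the same route as the paper: both factor $\gamma(n)$ through partition functions tied to the return probability of simple random walk on $\bbZ^{d-1}$, invoke the local CLT for that return probability together with Stirling to obtain the Gaussian shape \eqref{conc}, and then read off the ratio conditions \eqref{conv1}--\eqref{conv2} from the explicit formula for $\gamma(n+1)/\gamma(n)$. The only cosmetic difference is that the paper works with $Z_{L}^{d}=|\O_L|$ rather than your squared-multinomial sums $Z_k(m)$ (the two are related by $Z_{2m}^{k}=\binom{2m}{m}Z_k(m)$), and the paper is more explicit about the tail regime $|n-\bar n|\gtrsim L^{2/3}$, treating the upper bound in \eqref{conc} via Hoeffding and the lower bound via a product of ratios, since the $O(1/m)$ error in your $Z_{d-1}(m)$ asymptotic is uninformative when $m=L/2-n$ is small.
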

      \begin{proof}
      We derive the proposition from a local central limit theorem for sums of independent random variables.      Define
the partition function    
\begin{equation}\label{partition}
    Z_{L}^{d} = \sum_{n_{1},\ldots,n_{d} \in \Z_{+}, \;\sum_{j=1}^{d}n_{j} = L/2} \frac{L!}{(n_{1}!)^{2} \ldots (n_{d}!)^{2}}
   \end{equation} 
where the sum is over all nonnegative integers $n_{1},\ldots,n_{d}$ such that $\sum_{j=1}^{d}n_{j} = L/2$. Notice that $Z_{L}^{d} =|\O_L|$ is the number of $\bbZ^d$ paths starting and ending at the origin with length $L$. 
Let $p_{2n}$ denote the probability that the simple random walk on $\bbZ^d$ started at the origin is at the origin after $2n$ steps. 
     Then 
       \begin{equation}\label{srw}
Z_L^d   = (2d)^L p_L      .  \end{equation} 
The local central limit theorem, see e.g.\ \cite[Theorem 2.1.3]{LL}, states that 
   \begin{equation}\label{srw1}
  | p_{2n} - 2\,\bar p_{2n}| = O(n^{-(d+2)/2})       \end{equation} 
       where $\bar p_{2n} = (4\pi n)^{-d/2} ({\rm det }\G)^{-1/2}$, where $\G$ is the covariance matrix of the simple random walk. 
       The latter is given by $\G_{ij} = \d_{ij}\tfrac1d$, so that ${\rm det }\G = d^{-d}$, and therefore 
       $$\bar p_{2n} = (4\pi n)^{-d/2}d^{d/2} .$$
       It follows from \eqref{srw1} that 
          \begin{equation}\label{srw2}
  p_{2n} = (1+o(1))2\,\bar p_{2n} = 2 d^{d/2}(2\pi)^{-d/2}(2n)^{-d/2}.       \end{equation} 
Taking $L=2n$, from \eqref{srw},
  \begin{equation}\label{srwo}
Z_L^d   = (1+O(1/L))Q_d\,(2d)^L L^{-d/2},      \end{equation} 
where $Q_d:=2 d^{d/2}(2\pi)^{-d/2}$. 
%

    From \eqref{lawincre}, the distribution of 
   $N_{1}$ is given by
      \begin{equation}\label{lawN1}
   \begin{aligned}
     \gga(k) = \mu(N_{1} = k) =  \frac{L!}{(k!)^{2} (L-2k)!} \frac{Z_{L-2k}^{d-1}}{Z_{L}^{d}},
   \end{aligned}
   \end{equation} 
    where $0 \leq k \leq L/2$.
          To prove \eqref{conc}, we check that for $n \in [-\frac{L}{2d},(1-\tfrac1d)\frac{L}{2}]$, one has          \begin{equation}\label{dif}
         \frac{c\,e^{-\frac{n^{2}}{cL}}}{\sqrt{L}}  \leq \gga(n+\tfrac{L}{2d}) \leq \frac{ e^{-\frac{cn^{2}}{L}}}{c\sqrt{L}}.
          \end{equation} 
         Set $c_d:=(1-1/d)$.
         Clearly, 
              \begin{equation*}\label{nle}
       \gga(n+\tfrac{L}{2d}) = \frac{L!}{\left((n+\tfrac{L}{2d})!\right)^{2} \left(c_dL-2n\right)!} \frac{ Z^{d-1}_{c_dL-2n}}{Z_{L}^{d}}.
      \end{equation*}    
      We use Stirling's formula, in the form 
           \begin{equation}\label{stir}
       n! = \sqrt {2\pi}\,n^{n+\tfrac12}e^{-n}\Big(1+
       O\big(\tfrac1{n}\big)\Big).
      \end{equation}    
From \eqref{srwo} and \eqref{stir}, we obtain
               \begin{align}\label{nles}
       &\gga(n+\tfrac{L}{2d}) = \big(1+R(n,L)\big)
      \frac{Q_{d-1}}{Q_d} 
   \frac{(2(d-1))^{c_dL-2n}L^{d/2} }{(2d)^{L}(c_dL-2n)^{(d-1)/2} \left((n+\tfrac{L}{2d})!\right)^{2} \left(c_dL-2n\right)!
   }\nonumber\\
    &\quad =\big(1+R(n,L)\big)\frac{2dQ_{d-1}}{c_d^{d/2}Q_d} \frac1{\sqrt L}
     \left(1+\tfrac{2dN}{L}\right)^{-\tfrac{L}{d}-2k-1}
      \left(1-\tfrac{2N}{c_dL}\right)^{-c_dL+2N-d/2},
\end{align}    
where $$R(n,L) = O\big(\max\big\{(n+\tfrac{L}{2d})^{-1},(c_dL-2n)^{-1}\big\}\big).$$ Expanding in \eqref{nles} and simplifying one has
\begin{align}\label{nles2}
       \gga(n+\tfrac{L}{2d}) &= \big(1+R(n,L)\big)
       \tfrac{C(d)}{\sqrt L}
   \exp\left( -\tfrac{2d^{2}}{d-1} \tfrac{n^{2}}{L} + O(\tfrac{n^3}{L^2})\right),
\end{align}    
for some constant $C(d)>0$.
In particular, \eqref{nles2} shows that \eqref{dif} holds for all $n\in[-L^{2/3},L^{2/3}]$.
It remains to check the claim when $|n|\geq L^{2/3}$. We observe that
$$
\gga(n+\tfrac{L}{2d}) = \frac{\bbP(N_1 = n+\tfrac{L}{2d}; S_L=0)}{\bbP(S_L=0)},
$$
       where $S_L$ is the simple random walk on $\bbZ^d$ started at the origin after $L$ steps, and $N_1$ 
       is the number of $+e_1$ increments. Now, for some constant $C_1>0$, $\bbP(S_L=0) = p_L\geq (C_1L)^{-d/2}$ from \eqref{srw1}, so that 
       $$
       \gga(n+\tfrac{L}{2d}) \leq  C_1L^{d/2}\bbP(N_1 = n+\tfrac{L}{2d}).
       $$
       Under the law $\bbP$, $N_1$ is a binomial random variable with parameters $1/2d$ and $L$, and therefore, by Hoeffding's inequality,  any $n$ such that $|n|\geq L^{2/3}$ satisfies
       $$
       \gga(n+\tfrac{L}{2d}) \leq  C_1L^{d/2}e^{-2n^2/L}\leq 
       \frac{ e^{-\frac{cn^{2}}{L}}}{c\sqrt{L}},
       $$
       for a suitable constant $c>0$. We are left to show that 
      \begin{align}\label{nles3}        \gga(n+\tfrac{L}{2d}) 
      \geq \frac{c\,e^{-\frac{n^{2}}{cL}}}{\sqrt{L}} ,
       \end{align}
       for some $c>0$ for all $|n|\geq L^{2/3}$. We prove it for $n\geq L^{2/3}$, the case $n\leq -L^{2/3}$ being similar. 
Notice that, from \eqref{lawN1} with $k=\ell+L/2d$, 
\begin{align}\label{nles5}
\frac{\gga(k+1)}{\gga(k)} &= \frac{(c_dL-2\ell)(c_dL-2\ell-1)}{(\ell+L/2d+1)^{2} } \frac{Z_{c_dL-2(\ell+1)}^{d-1}}{Z_{c_dL-2\ell}^{d-1}}.
\end{align}
Using \eqref{srwo} and simplifying
\begin{align*} 
\frac{\gga(k+1)}{\gga(k)} & \geq\big(1-\tfrac{C}{c_dL - 2\ell}\big)\frac{\big(1-\tfrac{2\ell}{c_dL}\big)\big(1-\tfrac{2\ell+1}{c_dL}\big)}{\big(1+\tfrac{2d(\ell+1)}{L}\big)},
\nonumber\end{align*}
for some constant $C>0$. 
Taking products over $\ell=0,\dots,n-1$, 
\begin{align*}\label{nles6}
\gga(n+\tfrac{L}{2d}) \geq \exp{(-Cn^2/L)},
\end{align*}
for some new constant $C>0$, for all $n_0:=\e L\geq n\geq L^{2/3}$ if $\e>0$ is a suitable small constant. On the other hand, simple computations using \eqref{nles5} show that
$ \gga(n+\tfrac{L}{2d}) \geq \gga(n_0+\tfrac{L}{2d}) \exp{(-CL)}$ for all $n\geq n_0$.
This ends the proof of \eqref{dif}. 

To prove \eqref{conv1} it suffices to show that $\g(k+1)\leq C\g(k)$ for all $k\geq L/2d$ for some constant $C$. However, this follows immediately from \eqref{lawN1} and \eqref{srwo}.
The same argument proves \eqref{conv2}.
   \end{proof}
   
        \begin{lem}\label{le1}
There exists a constant $C>0$ such that for all $n\in[0,L/2-1]$:
\begin{equation}\label{mu}
\frac{n^2}{C \,(L-2n)^2}\leq \frac{\g(n)}{\g(n+1)} \leq \frac{C\,n^2}{(L-2n)^2}.
\end{equation}
\begin{proof}
This follows immediately from \eqref{lawN1}
and \eqref{srw}.
\end{proof}
\end{lem}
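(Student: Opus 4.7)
The approach is purely computational: I would compute the ratio $\gamma(n)/\gamma(n+1)$ explicitly from the formula \eqref{lawN1} and reduce the lemma to showing that a certain ratio of partition functions is bounded above and below by $d$-dependent positive constants, uniformly in the arguments.

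First, from \eqref{lawN1} one obtains after cancellation
\begin{equation*}
\frac{\gamma(n)}{\gamma(n+1)} = \frac{(n+1)^{2}}{(L-2n)(L-2n-1)}\cdot\frac{Z^{d-1}_{L-2n}}{Z^{d-1}_{L-2n-2}}\,.
\end{equation*}
The combinatorial prefactor $(n+1)^{2}/((L-2n)(L-2n-1))$ is manifestly comparable to $n^{2}/(L-2n)^{2}$ up to a universal multiplicative constant (for $n\geq 1$), so all the real work is in controlling the partition function ratio
\begin{equation*}
R_d(M):=\frac{Z^{d-1}_{M}}{Z^{d-1}_{M-2}}\,,\qquad M=L-2n\in\{2,4,\dots,L\}.
\end{equation*}

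The plan is to show that there exist constants $0<c_d\leq C_d<\infty$ such that $c_d\leq R_d(M)\leq C_d$ for all even $M\geq 2$. I would split into two regimes. For $M\geq M_0$ with $M_0=M_0(d)$ large enough, I invoke the asymptotic \eqref{srwo} applied in dimension $d-1$, which gives
\begin{equation*}
R_d(M)=\bigl(1+O(1/M)\bigr)\,4(d-1)^{2}\left(\frac{M-2}{M}\right)^{(d-1)/2}\,,
\end{equation*}
and since $(M-2)/M\in(0,1]$ this is bounded between two positive constants depending only on $d$. For the remaining finite range $2\leq M\leq M_0$ there are only finitely many values, and each $Z^{d-1}_M$ is a positive integer (the number of closed $\mathbb{Z}^{d-1}$ paths of length $M$), so the ratios are trivially bounded by constants depending on $d$. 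Combining with the combinatorial prefactor yields \eqref{mu}.

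The main obstacle, such as it is, is the book-keeping at the boundary $M$ small, where the local central limit asymptotic \eqref{srwo} is not directly useful; this is resolved by the elementary compactness observation above. No delicate cancellation or large-deviation input is needed, which is why the authors dispatch this with a one-line reference to \eqref{lawN1} and \eqref{srw}.
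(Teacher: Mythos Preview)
Your proposal is correct and is precisely the computation the paper's one-line proof is pointing to: take the ratio from \eqref{lawN1}, isolate the elementary factorial prefactor, and control the remaining quotient $Z^{d-1}_{M}/Z^{d-1}_{M-2}$ via \eqref{srw} (equivalently its consequence \eqref{srwo}), with a trivial compactness argument for small $M$. There is nothing to add; your treatment of the prefactor for $n\geq 1$ and the two-regime bound on $R_d(M)$ are exactly what the authors have in mind.
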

              \begin{lem}\label{le2}
Set $\bar N_1=N_1-L/2d$, $\si^2=\var_\mu(N_1)$ and $X=(\bar N_1)^2 - \si^2$. 
There exists a constant $C>0$ such that 
\begin{gather}\label{sig}
\frac{1}{C \,L}\leq \si^2\leq C\,L,\\
\label{vary}
\var_\mu(X)\leq C\,L^2.
\end{gather}
\end{lem}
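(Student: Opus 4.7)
The plan is to deduce all three bounds directly from the sub-Gaussian estimate \eqref{conc} for $\gamma$ established in Proposition \ref{birthanddeathfin}. With $\bar n = L/2d$, that estimate reads, uniformly in $n \in \{0,\ldots,L/2\}$,
\begin{equation*}
\frac{1}{c\sqrt{\bar n}}\, e^{-c(n-\bar n)^{2}/\bar n} \leq \gamma(n) \leq \frac{c}{\sqrt{\bar n}}\, e^{-(n-\bar n)^{2}/(c\bar n)}.
\end{equation*}
Each claim in the lemma then reduces to a moment computation against this essentially Gaussian distribution on a one-dimensional lattice of spacing $1$ and scale $\sqrt{\bar n}\asymp\sqrt L$.

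For the upper bound on $\sigma^{2}$, I would write $\sigma^{2}=\sum_{n}(n-\bar n)^{2}\gamma(n)$, insert the upper bound in \eqref{conc}, and compare the resulting sum with the Gaussian integral $\int u^{2}e^{-u^{2}/c}\,du$ after the change of variable $u=(n-\bar n)/\sqrt{\bar n}$. This yields $\sigma^{2}\leq C\bar n\leq C'L$, which is the upper bound in \eqref{sig}. For the matching lower bound, I would retain only the contribution of indices with $\sqrt{\bar n}\leq |n-\bar n|\leq 2\sqrt{\bar n}$; the comparability $c^{-1}\bar n\leq n_{\max}-\bar n$ and $c^{-1}\bar n\leq \bar n-n_{\min}$ built into {\rm Conv}$(c,\bar n)$ guarantees that such a window sits inside $\{0,\ldots,L/2\}$. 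On it, the lower bound in \eqref{conc} gives $\gamma(n)\geq c'/\sqrt{\bar n}$, while $(n-\bar n)^{2}\geq \bar n$, and there are $\Theta(\sqrt{\bar n})$ such terms, hence $\sigma^{2}\geq c''\bar n\geq L/C$. In particular, this is substantially stronger than the lower bound $\sigma^{2}\geq 1/(CL)$ literally stated in \eqref{sig}.

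For \eqref{vary}, I would first observe that, since $\mu[X]=\mu[(\bar N_{1})^{2}]-\sigma^{2}=0$,
$$
\var_{\mu}(X)=\mu[X^{2}]=\mu[(\bar N_{1})^{4}]-\sigma^{4}\leq \mu[(\bar N_{1})^{4}],
$$
so it is enough to control the fourth central moment of $N_{1}$. Applying the same change of variable $u=(n-\bar n)/\sqrt{\bar n}$ to $\sum_{n}(n-\bar n)^{4}\gamma(n)$, together with the upper bound in \eqref{conc}, gives $\mu[(\bar N_{1})^{4}]\leq C\bar n^{2}\leq C' L^{2}$, which is the required inequality.

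The argument carries no real conceptual difficulty: every step is a routine comparison with a Gaussian integral once \eqref{conc} is in hand. The one mildly delicate point is, in the lower bound on $\sigma^{2}$, checking that the chosen summation window sits inside the support of $\gamma$; this is guaranteed by the comparability clause in the definition of {\rm Conv}$(c,\bar n)$.
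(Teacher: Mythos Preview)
Your proposal is correct and follows essentially the same approach as the paper: both bound $\sigma^{2}$ and $\mu[(\bar N_{1})^{4}]$ by comparison with Gaussian sums/integrals using the concentration estimate \eqref{conc} (equivalently \eqref{dif}) from Proposition~\ref{birthanddeathfin}, and both reduce \eqref{vary} to the fourth moment via $\var_{\mu}(X)\leq \mu[(\bar N_{1})^{4}]$. Your lower-bound argument, restricting to the window $|n-\bar n|\in[\sqrt{\bar n},2\sqrt{\bar n}]$, is just a more explicit rendering of what the paper dismisses with ``is obtained in the same way''; your observation that this actually gives $\sigma^{2}\geq L/C$ (stronger than the literal $1/(CL)$ in \eqref{sig}) matches how the lemma is invoked in Section~\ref{laptrsec}.
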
\begin{proof}
From  \eqref{dif} it follows that 
$$
\si^2 = \sum_{n} n^2\g(n+\tfrac{L}{2d}) \leq C\,L\sum_{n}\frac{n^2}{ L}\frac{e^{-\tfrac{n^2}{CL}}}{\sqrt L}\leq C'L,
$$
for some constants $C,C'>0$, where we use a comparison with integrals for $L$ large. The lower bound on $\si^2$ is obtained in the same way. 
To prove \eqref{sig} simply observe that 
$$
\var_\mu(X)\leq \mu[(\bar N_1)^4] = \sum_{n} n^4\g(n+\tfrac{L}{2d}) \leq C\,L^2\sum_{n}\frac{n^4}{ L^2}\frac{e^{-\tfrac{n^2}{CL}}}{\sqrt L}\leq C'L^2,
$$
for some constants $C,C'>0$. 
\end{proof}

    \bibliographystyle{plain}

  \bibliography{dynpol}

\begin{thebibliography}{10}

\bibitem{CMR}
N.~Cancrini, F.~Martinelli, and C.~Roberto.
\newblock The logarithmic {S}obolev constant of {K}awasaki dynamics under a
  mixing condition revisited.
\newblock {\em Ann. Inst. H. Poincar\'e Probab. Statist.}, 38(4):385--436,
  2002.

\bibitem{shonan}
P.~Caputo.
\newblock Spectral gap inequalities in product spaces with conservation laws.
\newblock In {\em Stochastic analysis on large scale interacting systems},
  volume~39 of {\em Adv. Stud. Pure Math.}, pages 53--88. Math. Soc. Japan,
  Tokyo, 2004.

\bibitem{CLMST}
P.~Caputo, H.~Lacoin, F.~Martinelli, F.~Simenhaus, and F.~L. Toninelli.
\newblock Polymer dynamics in the depinned phase: metastability with
  logarithmic barriers.
\newblock {\em Probab. Theory Related Fields}, 153(3-4):587--641, 2012.

\bibitem{CaLiR}
P.~Caputo, T.~M. Liggett, and T.~Richthammer.
\newblock Proof of {A}ldous' spectral gap conjecture.
\newblock {\em J. Amer. Math. Soc.}, 23(3):831--851, 2010.

\bibitem{CMT}
P.~Caputo, F.~Martinelli, and F.~L. Toninelli.
\newblock On the approach to equilibrium for a polymer with adsorption and
  repulsion.
\newblock {\em Electron. J. Probab.}, 13:no. 10, 213--258, 2008.

\bibitem{DiSa}
P.~Diaconis and L.~Saloff-Coste.
\newblock Logarithmic {S}obolev inequalities for finite {M}arkov chains.
\newblock {\em Ann. Appl. Probab.}, 6(3):695--750, 1996.

\bibitem{GBbook}
Giambattista Giacomin.
\newblock {\em Random polymer models}.
\newblock Imperial College Press, London, 2007.

\bibitem{Lacoin}
H.~Lacoin.
\newblock Mixing time and cutoff for the adjacent transposition shuffle and the
  simple exclusion.
\newblock {\em Ann. Probab., to appear}.
\newblock arxiv 1309.3873v2.

\bibitem{LL}
G.~F. Lawler and V.~Limic.
\newblock {\em Random walk: a modern introduction}, volume 123 of {\em
  Cambridge Studies in Advanced Mathematics}.
\newblock Cambridge University Press, Cambridge, 2010.

\bibitem{LeY}
T.~Lee and H.-T. Yau.
\newblock Logarithmic {S}obolev inequality for some models of random walks.
\newblock {\em Ann. Probab.}, 26(4):1855--1873, 1998.

\bibitem{LPW}
D.~A. Levin, Y.~Peres, and E.~L. Wilmer.
\newblock {\em Markov chains and mixing times}.
\newblock American Mathematical Society, Providence, RI, 2009.
\newblock With a chapter by James G. Propp and David B. Wilson.

\bibitem{Mi}
L.~Miclo.
\newblock An example of application of discrete {H}ardy's inequalities.
\newblock {\em Markov Process. Related Fields}, 5(3):319--330, 1999.

\bibitem{Wil}
D.~B. Wilson.
\newblock Mixing times of {L}ozenge tiling and card shuffling {M}arkov chains.
\newblock {\em Ann. Appl. Probab.}, 14(1):274--325, 2004.

\bibitem{Yau_gen}
H.-T. Yau.
\newblock Logarithmic {S}obolev inequality for generalized simple exclusion
  processes.
\newblock {\em Probab. Theory Related Fields}, 109(4):507--538, 1997.

\end{thebibliography}

\end{document}